\newtheorem{thm}{Theorem}[section]
\newtheorem{proposition}[thm]{Proposition}
\newtheorem{lemma}[thm]{Lemma}
\newtheorem{corollary}[thm]{Corollary}
\numberwithin{equation}{section}
\theoremstyle{definition}
\newtheorem{remark}[thm]{Remark}
\newcommand{\qqed}{\hspace*{\fill}$\Box$}
\newcommand{\Db}{{\rm D}^{\rm b}}
\newcommand{\Aut}{{\rm Aut}}
\newcommand{\Hom}{{\rm Hom}}
\DeclareMathOperator{\Spec}{{\rm Spec}}
\newcommand{\id}{{\rm id}}
\newcommand{\Ext}{{\rm Ext}}
\newcommand{\ka}{{\mathcal A}}
\newcommand{\kb}{{\mathcal B}}
\newcommand{\kc}{{\mathcal C}}
\newcommand{\ki}{{\mathcal I}}
\newcommand{\kl}{{\mathcal L}}
\newcommand{\ko}{{\mathcal O}}
\newcommand{\CC}{\mathbb{C}}
\newcommand{\PP}{\mathbb{P}}
\newcommand{\QQ}{\mathbb{Q}}
\newcommand{\ZZ}{\mathbb{Z}}
\renewcommand{\to}{\xymatrix@1@=15pt{\ar[r]&}}
\renewcommand{\rightarrow}{\xymatrix@1@=15pt{\ar[r]&}}
\renewcommand{\mapsto}{\xymatrix@1@=15pt{\ar@{|->}[r]&}}
\renewcommand{\twoheadrightarrow}{\xymatrix@1@=15pt{\ar@{->>}[r]&}}
\renewcommand{\hookrightarrow}{\xymatrix@1@=15pt{\ar@{^(->}[r]&}}
\newcommand{\congpf}{\xymatrix@1@=15pt{\ar[r]^-\sim&}}
\renewcommand{\cong}{\simeq}
\begin{document}

\title{Some remarks on phantom categories and motives}
\author{Pawel Sosna}
\address{Fachbereich Mathematik der Universit\"at Hamburg\\
Bundesstra\ss e 55\\
20146 Hamburg, Germany}
\email{pawel.sosna@math.uni-hamburg.de}

\begin{abstract}
A phantom category is an admissible subcategory with vanishing Grothendieck group of the bounded derived category of coherent sheaves on a smooth projective variety. The goal of this paper is to study the abstract situation when such a category appears and establish some results which provide evidence for the idea that these categories are invisible on the level of Chow motives.
\end{abstract}
\maketitle

\section{Introduction}

The bounded derived category $\Db(X)$ of a smooth projective variety $X$ has for quite some time been recognized as an interesting invariant encoding a lot of geometric information. For example, there are results, and conjectures, linking semi-orthogonal decompositions of $\Db(X)$ to the birational geometry or to the structure of the Chow and/or the noncommutative motive of $X$, see, for example, \cite{Bern-Tab}, \cite{Kuz10} or \cite{Marc-Tab}. 

Recently, very special examples of semi-orthogonal decompositions were constructed. Namely, it was shown for several complex surfaces $X$ that $\Db(X)$ admits a semi-orthogonal decomposition consisting of an exceptional collection of line bundles whose orthogonal complement is a category with trivial Hochschild homology and finite or trivial Grothendieck group; see, for example, \cite{A-O12}, \cite{BBS12}, \cite{BBKS}, \cite{Cho-Lee}, \cite{GS13}, \cite{Gor-Orl} or \cite{Lee15}. In the first case we call such a category a quasi-phantom and in the second case a phantom. There is also the notion of a universal phantom category, see Subsection \ref{sDerCat}.

Instead of adding to the list of examples of (quasi-)phantom categories, in this paper the abstract situation when such a category appears is studied. In all known examples where this happens, the Grothendieck group of the variety is of finite rank, and in Proposition \ref{pGrothExc} we show that this already implies that the structure sheaf is an exceptional object. In Propositions \ref{pPhantomChow} and \ref{pPhantom-Lefschetz} we explore, under certain assumptions on the semi-orthogonal decomposition, some implications for the structure of the Chow motive of a variety admitting a (universal) phantom category. We also investigate when a phantom category is automatically a universal phantom category, see Proposition \ref{pPhan-uPhan}.

The paper is organized as follows. The relevant notions and results concerning semi-orthogonal decompositions, Chow and $K$-motives are collected in Section 2. In Section 3 we prove Proposition \ref{pGrothExc}. In the following section we establish the results alluded to above. In the final section we make some comments on the Hochschild cohomology when products of (quasi-)phantom categories are taken and point out some possible future directions of research.
\smallskip

\noindent\textbf{Conventions.} Unless stated otherwise we work in the category of smooth projective varieties over some field $k$. All functors are assumed to be derived.
\smallskip

\noindent\textbf{Acknowledgements.} I thank Daniel Huybrechts and Andreas Krug for useful comments and suggestions on a preliminary version of the paper and Sergey Galkin and Charles Vial for their comments after the paper was put on the arXiv. I was partially financially supported by the RTG 1670 of the DFG (German Research Foundation).

\section{Preliminaries}
\subsection{Derived categories}\label{sDerCat}
An object $E\in \Db(X)$ is called \emph{exceptional} if $\Hom^0(E,E)=k$ and $\Hom^i(E,E)=0$ for all $i\neq 0$. An ordered sequence of exceptional objects $(E_1,\ldots,E_r)$ is called an \emph{exceptional collection} if $\Hom^l(E_j,E_i)=0$ for all $j>i$ and for all $l$. An exceptional collection is called \emph{full} if the smallest triangulated subcategory of $\Db(X)$ containing all the $E_i$ is equivalent to $\Db(X)$. 	

The above notions fit into the more general framework of semi-orthogonal decompositions which were introduced in \cite{Bon-Kap}.
Namely, a \emph{semi-orthogonal decomposition} (s.d.) of $\Db(X)$ is a sequence of strictly full triangulated subcategories $\ka_1,\ldots,\ka_m$ such that (a) if $A_i\in \mathcal{A}_i$ and $A_j\in \mathcal{A}_j$, then $\text{Hom}(A_i,A_j[l])=0$ for $i>j$ and all $l$, and (b) the $\mathcal{A}_i$ generate $\Db(X)$, that is, the smallest triangulated subcategory of $\Db(X)t$ containing all the $\mathcal{A}_i$ is already $\Db(X)$. We write $\Db(X)=\langle\ka_1,\ldots,\ka_m\rangle$. If $m=2$, these conditions boil down to the existence of a functorial exact triangle $A_2\to D\to A_1\to A_2[1]$ for any object $D \in \Db(X)$.

A full subcategory of $\Db(X)$ is called \emph{admissible} if the embedding functor has a left and a right adjoint. Since $\mathsf{S}=-\otimes\omega_X[\dim(X)]$ is a Serre functor on $\Db(X)$, that is, there are bifunctorial isomorphisms $\Hom(D,D')\cong \Hom(D',\mathsf{S}(D))^\vee$, any admissible subcategory of $\Db(X)$ also has a Serre functor. 

It can be checked that in the situation of a semi-orthogonal decomposition of $\Db(X)$ the subcategories involved are admissible. Conversely, if $\kc$ is admissible in $\Db(X)$, then $\Db(X)=\langle \kc^\bot,\kc\rangle$, where $\kc^\bot=\{D\in \Db(X)\mid \Hom(C,D)=0\; \forall\, C\in \kc\}$ is the (right) \emph{orthogonal complement} to $\kc$. 

One can see rather easily that if $E\in \Db(X)$ is an exceptional object, then the category generated by $E$, which is equivalent to $\Db(\mathrm{Spec}k)$, is admissible, see \cite[Lem.\ 1.58]{Huy-book}. Abusing notation one denotes this category again by $E$.

Given two semi-orthogonal decompositions $\Db(X)=\langle \ka_1,\ldots,\ka_m\rangle$ and $\Db(Y)=\langle \kb_1,\ldots,\kb_n\rangle$, there is an induced semi-orthogonal decomposition of the product given by
\[\Db(X\times Y)=\langle (\ka_i\boxtimes\kb_j)_{i,j}\rangle,\]
where $\ka_i\boxtimes\kb_j$ denotes the smallest triangulated subcategory of $\Db(X\times Y)$ containing all objects of the form $p^*A\otimes q^*B$ (where $p\colon X\times Y\to X$ and $q\colon X\times Y\to Y$ are the respective projections) for $A\in \ka_i$ and $B\in \kb_j$, and closed under direct summands; see \cite[Prop.\ 1.6]{Gor-Orl} or \cite[Thm.\ 5.8]{Kuzbase}. 

If $\Db(X)=\langle \ka_1,\ldots,\ka_m\rangle$, then the adjoints to the embeddings give an isomorphism $K_0(X)=K_0(\Db(X))\cong \oplus_{i=1}^mK_0(\ka_i)$, where $K_0(-)$ denotes the Grothendieck group.

An admissible subcategory $\ka$ of $\Db(X)$ is called a \emph{phantom category} if $K_0(\ka)=0$ and a \emph{universal phantom category} if $K_0(\ka\boxtimes \Db(Y))=0$ for any $Y$. If $\Db(X)=\langle \ka, E_1,\ldots,E_r\rangle$ is an s.d.\ where the $E_i$ are all exceptional and $\ka$ is a (universal) phantom category, we call the collection $(E_1,\ldots,E_r)$ \emph{almost full}.

\subsection{Motives}
One possible reference for the following two subsections is \cite{Manin}.

The category of Chow motives $\mathrm{CM}(k)$ has as objects triples $(X,\pi,n)$, where $X$ is smooth projective, $\pi\in \mathrm{CH}^{\dim X}(X\times X)$ is a cycle with $\pi\circ \pi=\pi$ and $n\in \ZZ$ (as usual, we write $\mathrm{CH}^n(X)$ for codimension $n$ cycles). Here, $\pi\circ\pi$ is the usual convolution product. The morphisms in $\mathrm{CM}(k)$ are given by 
\[\Hom_{\mathrm{CM}(k)}((X,\pi,n),(Y,\psi,m))=\psi\circ \mathrm{CH}^{\dim X+m-n}(X\times Y)\circ\pi.\]
There is a contravariant functor from varieties to $\mathrm{CM}(k)$ which sends $X$ to $M(X)=(X,\Delta_X,0)$. The \emph{unit motive} is $M(\Spec k)=\mathbbm{1}=(\Spec k,\Delta,0)$ and the \emph{Lefschetz motive} is $\mathbb{L}=(\Spec k,\Delta,-1)$. 

It is easy to check that $M(\PP^1)=\mathbbm{1}\oplus \mathbb{L}$. Note that $\mathrm{CM}(k)$ is a symmetric monoidal category with the tensor product induced by the fibre product of varieties. The unit with respect to this tensor structure is precisely $\mathbbm{1}$.

In fact, all of this can be done over any extension ring $R$ of $\ZZ$, that is, one replaces integral Chow groups by Chow groups with $R$-coefficients.

Probably the simplest motives are those of \emph{Lefschetz type}, that is, those which are isomorphic to a direct sum of finitely many motives $\mathbb{L}^{\otimes p}$ for some integers $p$.

\subsection{$K$-motives}
The category of $K$-motives $\mathrm{KM}(k)$ is defined similarly as the category of Chow motives but with Grothendieck groups replacing cycles. For instance, objects of $\mathrm{KM}(k)$ are pairs $(X,\pi)$ where $\pi\in K_0(X\times X)$ is a projector. Yet again, we have a contravariant functor $\mathrm{KM}$ from varieties to $K$-motives. The category $\mathrm{KM}(k)$ is symmetric monoidal and the unit object is $\mathbf{1}=\mathrm{KM}(\Spec k)=(\Spec k,[\Delta])$. A $K$-motive is called of \emph{trivial type} if it is isomorphic to a finite direct sum $\oplus_{i=1}^m \mathbf{1}$. Of course, in this setting we can also consider extension rings of $\ZZ$.

In fact, $K$-motives fit into the more general framework of \emph{noncommutative motives} defined using the language of differential graded categories; see \cite{Marc-Tab}. In particular, the category $\mathrm{KM}(k)$ embeds into the category of noncommutative motives. Furthermore, we can associate a $K$-motive to any admissible subcategory of $\Db(X)$. The following is an easy consequence of the construction.

\begin{lemma}
Assume there are semi-orthogonal decompositions $\Db(X)=\langle \ka_1,\ldots,\ka_m\rangle$ and $\Db(Y)=\langle \kb_1,\ldots,\kb_n\rangle$ and consider the induced s.d.\
\[\Db(X\times Y)=\langle (\ka_i\boxtimes\kb_j)_{i,j}\rangle.\]
Then $\Hom_{\mathrm{KM}(k)}(\mathrm{KM}(\ka_i),\mathrm{KM}(\kb_j))=K_0(\kb_j\boxtimes \ka_i)$.
\end{lemma}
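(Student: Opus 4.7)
The plan is to unwind both sides of the claimed equality in terms of projectors and match them via the direct sum decomposition of $K_0(X\times Y)$ coming from the product semi-orthogonal decomposition. Because the subcategories $\ka_i\subset\Db(X)$ and $\kb_j\subset\Db(Y)$ are admissible, the endofunctors projecting onto them are Fourier--Mukai transforms with kernels $P_i\in\Db(X\times X)$ and $Q_j\in\Db(Y\times Y)$; setting $\pi_i=[P_i]\in K_0(X\times X)$ and $\rho_j=[Q_j]\in K_0(Y\times Y)$, the construction of $\mathrm{KM}$ applied to an admissible subcategory gives $\mathrm{KM}(\ka_i)=(X,\pi_i)$ and $\mathrm{KM}(\kb_j)=(Y,\rho_j)$.

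From the definition of morphisms in $\mathrm{KM}(k)$,
\[
\Hom_{\mathrm{KM}(k)}(\mathrm{KM}(\ka_i),\mathrm{KM}(\kb_j))=\rho_j\circ K_0(X\times Y)\circ\pi_i,
\]
so the task reduces to identifying the image of the two-sided convolution $\alpha\mapsto\rho_j\circ\alpha\circ\pi_i$ on $K_0(X\times Y)$ with the summand $K_0(\ka_i\boxtimes\kb_j)$; the latter is in turn identified with $K_0(\kb_j\boxtimes\ka_i)$ via the swap equivalence $\Db(X\times Y)\cong\Db(Y\times X)$.

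The key step will be to check that the idempotent $\alpha\mapsto\rho_j\circ\alpha\circ\pi_i$ on $K_0(X\times Y)$ agrees with the projector onto the $(i,j)$-summand in the decomposition $K_0(X\times Y)=\bigoplus_{p,q}K_0(\ka_p\boxtimes\kb_q)$. Concretely, the external tensor product $\pi_i\boxtimes\rho_j$, transported to an element of $K_0((X\times Y)\times(X\times Y))$ by permuting the middle factors, should coincide with the Fourier--Mukai kernel of the projection functor onto $\ka_i\boxtimes\kb_j\subset\Db(X\times Y)$. The orthogonality relations $\pi_p\circ\pi_{p'}=\delta_{pp'}\pi_p$ and $\rho_q\circ\rho_{q'}=\delta_{qq'}\rho_q$, which reflect semi-orthogonality of the pieces of the given s.d., then imply that two-sided convolution by $\rho_j$ and $\pi_i$ acts as the identity on the $(i,j)$-summand and as zero on all others.

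The main obstacle I anticipate is making this reshuffling rigorous: proving that the external product of the projector kernels $P_i$ and $Q_j$ is indeed a kernel for the projection onto $\ka_i\boxtimes\kb_j\subset\Db(X\times Y)$ requires some bookkeeping with base change and the projection formula in order to match the external product of Fourier--Mukai functors with the composition of projectors on $\Db(X\times Y)$. Once this compatibility is in place, the matching of the two decompositions, together with the swap accounting for the order $\kb_j\boxtimes\ka_i$ in the statement of the lemma, follows formally.
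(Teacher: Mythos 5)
Your overall strategy---writing $\mathrm{KM}(\ka_i)=(X,\pi_i)$ and $\mathrm{KM}(\kb_j)=(Y,\rho_j)$ with $\pi_i,\rho_j$ the classes of the Fourier--Mukai kernels of the projection functors, unwinding the definition of morphisms in $\mathrm{KM}(k)$ to the two-sided convolution $\alpha\mapsto\rho_j\circ\alpha\circ\pi_i$ on $K_0(X\times Y)$, and comparing its image with the decomposition $K_0(X\times Y)=\bigoplus_{p,q}K_0(\ka_p\boxtimes\kb_q)$---is exactly the route the paper takes (the paper itself only sketches this and defers the details to Gorchinskiy--Orlov, Sect.~4). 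However, the step you lean on is false: the projectors attached to the pieces of a semi-orthogonal decomposition do \emph{not} satisfy $\pi_p\circ\pi_{p'}=\delta_{pp'}\pi_p$. Writing $\Phi_p=j_{\ka_p}\circ j_{\ka_p}^R$, one has $\Phi_p\circ\Phi_{p'}=0$ only for $p>p'$ (because $\ka_{p'}\subset\ka_p^{\perp}$ precisely when $\Hom(\ka_p,\ka_{p'})=0$); in the other order the composition is in general nonzero. Already for $\Db(\PP^1)=\langle\ko,\ko(1)\rangle$ the kernels are $\ko\boxtimes\ko$ and $\ko(-1)\boxtimes\ko(1)$, and $\Phi_1\circ\Phi_2$ has kernel class $2[\ko(-1)\boxtimes\ko]\neq 0$. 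Consequently your two-sided convolution is not ``zero on all other summands''; it is only a non-orthogonal (triangular) idempotent.

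The statement survives, but for a weaker reason that has to be argued differently: each $\Phi_p$ has essential image $\ka_p$ and restricts to the identity on $\ka_p$, so the induced idempotent on $K_0$ has image exactly $K_0(\ka_p)$ even though its kernel is not the sum of the remaining summands. The analogous claim for the two-sided convolution requires knowing where the composite kernel $Q_j\circ E\circ P_i$ of $\Psi_j\circ\mathrm{FM}_E\circ\Phi_i$ lives inside $\Db(X\times Y)$, and that the $K_0$ of that subcategory is identified with $K_0(\kb_j\boxtimes\ka_i)$. This is precisely the ``reshuffling'' you flag as an anticipated obstacle and do not carry out, and it is where the transposes and duals of the projection kernels enter (right convolution with $\pi_i$ acts on the $X$-variable of a kernel through the transpose of $P_i$, which is why the natural answer is $K_0(\kb_j\boxtimes\ka_i)$ rather than literally the $(i,j)$-summand, the two groups being isomorphic). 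So: right approach, same as the paper's, but the orthogonality relations you invoke are false and the key identification is asserted rather than proved; as written the argument has a genuine gap.
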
 

\begin{proof}
We only briefly outline the proof, refering to \cite[Sect.\ 4]{Gor-Orl} for details.

Denote the inclusion $\ka_i\to \Db(X)$ by $j_{\ka_i}$. Since $\ka_i$ is admissible, it has a right adjoint $j_{\ka_i}^R$. The functor $\Phi=j_{\ka_i}\circ j_{\ka_i}^R$ is a Fourier-Mukai functor by \cite[Thm.\ 7.1]{Kuzbase}, that is, there exists an object $K\in \Db(X\times X)$ such that $\Phi=\mathrm{FM}_K$, where $\mathrm{FM}_K=p_{2*}(K\otimes p_1^*(-))$.

By definition, $\mathrm{KM}(\ka_i)=(X,[K])$. The claim follows from this.
\end{proof}

Another consequence of the construction is the isomorphism 
\[\mathrm{KM}(\ka_i\boxtimes \kb_j)\cong \mathrm{KM}(\ka_i)\otimes\mathrm{KM}(\kb_j).\]

Concerning the interplay between Chow motives and $K$-motives, we know by \cite[Prop.\ 4.2]{Gor-Orl} that if the Chow motive of a smooth projective variety $X$ of dimension $n$ over a field of characteristic zero is of Lefschetz type over a ring $R\supseteq\ZZ$, then the $K$-motive is of trivial type over $R$. The converse was known to hold over $\QQ$ by \cite{Marc-Tab} and was recently shown to already hold over $R[1/(2n)!]$, albeit under some assumptions on $R$; see \cite[Thm.\ 1.4]{Bern-Tab}. 

\section{Phantom-free results}

So far, phantoms have been constructed either as orthogonal complements to almost full exceptional collections on some complex surfaces or on products of such surfaces. In particular, in all examples the Grothendieck group is of finite rank. This has the following curious consequence.

\begin{proposition}\label{pGrothExc}
Let $X$ be a smooth complex projective variety of dimension $n$ whose Grothendieck group is of finite rank. Then $\ko_X$ is an exceptional object.
\end{proposition}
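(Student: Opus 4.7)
My plan is to translate the hypothesis on $K_0(X)$ into a finite-dimensionality statement about Chow groups, and then to extract the desired cohomological vanishings via the Mumford--Roitman theorem. Since $X$ is irreducible, $H^0(X,\ko_X) = \CC$, so the real content of the proposition is that $\Hom^i(\ko_X,\ko_X) = H^i(X,\ko_X)$ vanishes for every $i \geq 1$.

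First I would invoke Grothendieck's theorem that the rational Chern character
\[\ch_\QQ\colon K_0(X)_\QQ \cong \CH^*(X)_\QQ\]
is an isomorphism of $\QQ$-vector spaces for every smooth quasi-projective variety. Under the finite-rank hypothesis on $K_0(X)$, this gives $\CH^*(X)_\QQ$ finite-dimensional over $\QQ$; in particular the top Chow group $\CH_0(X)_\QQ = \CH^n(X)_\QQ$ is a finite-dimensional $\QQ$-vector space.

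Next I would appeal to the theorem of Mumford (for $p=2$ and $X$ a surface) and of Roitman (in arbitrary dimension and for arbitrary $p \geq 1$): if $X$ is smooth projective over $\CC$ and $H^0(X,\Omega^p_X) \neq 0$ for some $p \geq 1$, then $\CH_0(X)_\QQ$ is infinite-dimensional. Contrapositively, finite-dimensionality of $\CH_0(X)_\QQ$ forces $H^0(X,\Omega^p_X) = 0$ for all $p \geq 1$, and Hodge symmetry on $H^p(X,\CC)$ translates this to $H^p(X,\ko_X) = 0$ for every $p \geq 1$, finishing the verification that $\ko_X$ is exceptional. (For $p=1$ one can also argue directly: the Albanese map $\CH_0(X)^0 \to \mathrm{Alb}(X)$ is surjective, and $\mathrm{Alb}(X)(\CC)\otimes\QQ$ would be an uncountable $\QQ$-vector space whenever $\mathrm{Alb}(X)\neq 0$, contradicting the finite-dimensionality of $\CH_0(X)_\QQ$.)

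The main obstacle I expect is the Mumford--Roitman input. Its proof shows that a non-zero holomorphic $p$-form on $X$, paired with chains realising rational equivalences inside a symmetric product $S^d X$, yields a non-constant map from a positive-dimensional algebraic family into $\CH_0(X)_\QQ$, thereby making it infinite-dimensional. A subsidiary annoyance is that several distinct notions of ``finite-dimensional'' appear in this circle of ideas---Mumford's original formulation demands surjectivity of $S^d X \to \CH_0(X)$ for some $d$, whereas the condition at hand is finite-dimensionality of the abstract $\QQ$-vector space $\CH_0(X)_\QQ$---but, over $\CC$, the implication needed here is the routine one.
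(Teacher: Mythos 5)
Your argument is essentially the paper's: both pass from the finite-rank hypothesis on $K_0(X)$ to finite-dimensionality of $\mathrm{CH}_0(X)_\QQ$ via Grothendieck's Chern character isomorphism, then invoke the Mumford--Roitman/Bloch--Srinivas circle of results to force $H^0(X,\Omega^p_X)=0$ for all $p\geq 1$, and conclude by Hodge symmetry. The only cosmetic difference is that the paper routes the middle step through the Albanese variety (Roitman's isomorphism $\mathrm{CH}_0(X)_{\mathrm{hom}}\cong\mathrm{Alb}(X)$ followed by Voisin's Cor.\ 10.18), whereas you quote the contrapositive infinite-dimensionality statement directly, flagging the same subtlety about the two notions of finite-dimensionality that the cited references have to reconcile.
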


\begin{proof}
Recall that $\mathrm{CH}^\bullet(X)\otimes\QQ\cong K_0(X)\otimes\QQ$, hence the Chow ring is of finite rank. In particular, $\mathrm{CH}^n(X)=\mathrm{CH}_0(X)$ is a finitely generated abelian group. Therefore, the kernel $\mathrm{CH}^n(X)_{\text{hom}}=\mathrm{CH}_0(X)_{\text{hom}}$ of the cycle map $cl\colon \mathrm{CH}^n(X)\to H^{2n}(X,\ZZ)\cong \ZZ$ is a finitely generated abelian group. In particular, by a theorem of Roitman, the Albanese map gives an isomorphism $\mathrm{CH}_0(X)_{\text{hom}}\cong \mathrm{Alb}(X)$, see \cite[Thm.\ 10.2]{Voisin-book2}.

On the other hand, the dimension of the Albanese variety of $X$ has to be zero by our assumption, hence $\mathrm{Alb}(X)=0$, so $\mathrm{CH}_0(X)_{\text{hom}}$ is trivial. Then, by \cite[Cor.\ 10.18]{Voisin-book2}, $H^0(X,\Omega^k_X)=0$ for all $k>0$. By Hodge symmetry, we conclude that $h^{0,k}=0$ for all $k>0$, that is, $\ko_X$ is exceptional.  
\end{proof}

\begin{remark}
The case of curves is trivial in this context. In the case of a complex surface $S$, the converse holds if we assume the Bloch conjecture. We can reformulate this more categorically by saying that $F^2K_0(S)\cong \ZZ$, where $F^2K_0(S)$ is the subgroup generated by sheaves supported in points. 

It is not clear to the author what assumption one has to add in general to prove the converse. For example, the structure sheaf of a cubic fourfold $X$ is exceptional, but the Grothendieck group is far from being of finite rank. Indeed, $\Db(X)=\langle \ka,\ko,\ko(1),\ko(2)\rangle$, where $\ka$ is a K3 category, that is, its Serre functor is shift by 2 and its Hochschild (co)homology coincides with that of a K3 surface. If $X$ is known to be rational, then $\ka$ is equivalent to $\Db(S)$ for a K3 surface $S$. By a theorem of Mumford, see \cite[Thm.\ 10.1]{Voisin-book2}, the Grothendieck group of $S$ is far from being of finite rank, so the same holds for $X$.
\end{remark}

\begin{remark}
In some sense, exceptional objects are the simplest ones in $\Db(X)$. Other reasonably simple objects, at least in terms of their endomorphism algebras, are the following.
\begin{enumerate}
\item If $\Hom^\bullet(E,E)=\CC\oplus \CC[-d]$, we call $E$ a \emph{$d$-spherelike object}.  
\item If $E$ is $d$-spherelike and $\mathsf{S}(E)\cong E[d]$, we call $E$ a \emph{$d$-spherical object}. These are interesting since any such object induces an autoequivalence of $\Db(X)$; see \cite{Seid-Thom}.
\item If $\mathsf{S}(E)\cong E[2n]$ and $\Hom^\bullet(E,E)\cong H^*(\PP^n,\CC)$, we call $E$ a \emph{$\PP^n$-object}. These objects also induce autoequivalences; see \cite{Huyb-Thomas}. 
\end{enumerate}

Now, if $X$ is $d$-dimensional, then $\ko_X$ is $d$-spherical if and only if $X$ is strict Calabi-Yau, that is, $\omega_X\cong \ko_X$ and $H^i(\ko_X)=\CC$ for $i=0,\dim(X)$ and $0$ otherwise. Andreas Krug observed that $\ko_X$ is a $\PP^n$-object if and only if $X$ is an irreducible holomorphic symplectic variety of dimension $2n$. It seems that there is no good characterisation when $\ko_X$ is $d$-spherelike. In fact, the structure sheaf of a K3 surface is $2$-spherical, hence in particular also $2$-spherelike. But there also exist surfaces of general type whose structure sheaf is $2$-spherelike. Blowing up a Calabi-Yau variety in points gives examples where $\ko_X$ is $d$-spherelike and not $d$-spherical. See \cite{HKP} for the proofs of the last statements. 

It is somewhat peculiar that it is probably quite difficult to categorically characterise the situation where $\ko_X$ is exceptional.
\end{remark}

\begin{remark}
Note that if $K_0(X)$ is torsion free, then $\mathrm{CH}^1(X)$ is torsion free by \cite[Lem.\ 2.2]{GKMS}. If $K_0(X)$ is torsion free and of finite rank and $X$ is defined over an algebraically closed field, then $\ko_X$ is exceptional, hence by a theorem of Roitman, see \cite[Thm.\ 10.14]{Voisin-book2}, $\mathrm{CH}^{\dim X}(X)$ is torsion free. 

In particular, if $X$ has a full exceptional collection (and is defined over $k=\overline{k}$), then $\mathrm{CH}^0(X)$, $\mathrm{CH}^1(X)$ and $\mathrm{CH}^{\dim(X)}(X)$ are torsion free. A similar statement holds if the collection is only almost full.
\end{remark}

\section{On phantoms and motives}

We begin our investigation of phantom categories with the following easy result which just says that universal phantoms behave well with respect to base change and embeddings.

\begin{proposition}\label{pEasyresults}
Let $X$ be a smooth projective variety over a field $k$ with a semi-orthogonal decomposition $\Db(X)=\langle \ka,\kb\rangle$, where $\ka$ is a universal phantom category. 
\begin{enumerate}
\item Assume there exists a fully faithful functor $\Db(X)\to \Db(Y)$. Then $\ka\subset \Db(Y)$ is a universal phantom category. In particular, this holds for blow-ups, that is, if $Y=\mathrm{Bl}_ZX$ or $Y=\mathrm{Bl}_XW$.
\item Given any smooth projective $Y$, $\ka\boxtimes \Db(Y)\subset \Db(X\times Y)$ is a universal phantom category.
\end{enumerate}
\end{proposition}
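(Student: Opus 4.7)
The plan is to reduce both parts to the defining property of a universal phantom, namely the vanishing of $K_0(\ka\boxtimes \Db(Z))$ for every smooth projective $Z$. Throughout I will use that every fully faithful exact functor between bounded derived categories of smooth projective varieties admits left and right adjoints (Bondal--Van den Bergh) and is of Fourier--Mukai type (Orlov).

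For (1), let $F\colon \Db(X)\to \Db(Y)$ be fully faithful with kernel $K\in \Db(X\times Y)$. First I would note that composing the adjoints of $F$ with those of $\ka\hookrightarrow \Db(X)$ shows that the essential image $F(\ka)\subset \Db(Y)$ is admissible. To verify the universal phantom property I need $K_0(F(\ka)\boxtimes \Db(Z))=0$ for every $Z$. The key step will be to produce an equivalence $\ka\boxtimes \Db(Z)\simeq F(\ka)\boxtimes \Db(Z)$: my candidate is the Fourier--Mukai functor $F\boxtimes \id$ with kernel $p_{13}^{*}K\otimes p_{24}^{*}\ko_{\Delta_Z}$ on $(X\times Z)\times (Y\times Z)$, which I expect to be again fully faithful (either by a direct unit/counit computation or by invoking Kuznetsov's base-change formalism from \cite{Kuzbase}) and which sends the generating objects $p^{*}A\otimes q^{*}B$ of $\ka\boxtimes \Db(Z)$ precisely to the generating objects $p^{*}F(A)\otimes q^{*}B$ of $F(\ka)\boxtimes \Db(Z)$. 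Since both categories are closed under direct summands, $F\boxtimes \id$ will restrict to an equivalence and therefore identify the two Grothendieck groups; the claim then follows from the universal phantom hypothesis on $\ka$. The two blow-up assertions are special cases: Orlov's blow-up formula provides a fully faithful embedding $\Db(X)\hookrightarrow \Db(Y)$ in both situations, via pullback when $Y=\mathrm{Bl}_{Z}X$ and via one of the exceptional divisor components when $Y=\mathrm{Bl}_{X}W$.

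For (2), I plan to exploit the associativity of the $\boxtimes$ operation. Building the induced s.d.\ of $\Db(X\times Y\times Z)$ in two steps (first from $\Db(X)=\langle \ka,\kb\rangle$ and $\Db(Y)$, then from the result and $\Db(Z)$) should yield the same decomposition as the one-step construction from $\Db(X)=\langle \ka,\kb\rangle$ and $\Db(Y\times Z)$, since both are determined by the same generators $p^{*}A\otimes q^{*}B$ together with closure under direct summands and since $\Db(Y)\boxtimes \Db(Z)=\Db(Y\times Z)$. This will give
\[(\ka\boxtimes \Db(Y))\boxtimes \Db(Z)\simeq \ka\boxtimes \Db(Y\times Z),\]
and the right-hand side has trivial Grothendieck group by the universal phantom hypothesis on $\ka$.

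The hardest part will be verifying the fully faithfulness of $F\boxtimes \id$ and the resulting identification of the boxtimes subcategories in part (1); this is the only non-formal ingredient and is essentially a standard consequence of Fourier--Mukai base change, but it needs care to make precise.
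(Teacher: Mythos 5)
Your argument is correct, but for part (1) it takes a genuinely different route from the paper. The paper deduces both items in one stroke from the $K$-motive formalism: by \cite[Prop.\ 4.4]{Gor-Orl} a category is a universal phantom if and only if its $K$-motive vanishes, and since $\mathrm{KM}(k)$ is symmetric monoidal and the $K$-motive of an admissible subcategory does not change under a fully faithful embedding of the ambient category, both (1) and (2) follow immediately; this also yields for free the stronger statement that $\ka\boxtimes\kc$ is a universal phantom for \emph{any} admissible $\kc\subset\Db(Y)$. Your proof of (1) instead works directly with Fourier--Mukai kernels: the full faithfulness of $F\boxtimes\id_Z$ does follow from convolution of kernels (the kernel of $(F\boxtimes\id)^L\circ(F\boxtimes\id)$ is $(K'\circ K)\boxtimes(\ko_{\Delta_Z}\circ\ko_{\Delta_Z})\cong\ko_{\Delta_{X\times Z}}$), and since the essential image of a fully faithful kernel functor is admissible, hence thick, the thick closure of the generators $p^*A\otimes q^*B$ is carried onto the thick closure of $p^*F(A)\otimes q^*B$, giving the equivalence $\ka\boxtimes\Db(Z)\simeq F(\ka)\boxtimes\Db(Z)$ you want. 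This is more work than citing \cite{Gor-Orl}, but it is elementary and avoids the noncommutative-motive machinery. Your proof of (2) via associativity of $\boxtimes$, i.e.\ $(\ka\boxtimes\Db(Y))\boxtimes\Db(Z)\cong\ka\boxtimes\Db(Y\times Z)$, is precisely the ``down-to-earth'' alternative argument the paper itself records. The blow-up reductions via Orlov's formula match the paper as well.
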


\begin{proof}
First note that by Orlov's formula, see \cite{Orl-blowup}, which describes the derived category of a blow-up as a semi-orthogonal decomposition of the derived category of the base and several copies of the derived category of the center of the blow-up (the number of copies depends on the codimension of the center), the statement about blow-ups indeed follows from item (1).
  
Both items follow immediately from \cite[Prop.\ 4.4]{Gor-Orl}, which in particular establishes that $\ka$ is a universal phantom category if and only if $\mathrm{KM}(\ka)=0$, and the fact that $\mathrm{KM}(k)$ is a symmetric monoidal category. Note that the same argument shows that if $\kc\subset \Db(Y)$ is admissible, then $\ka\boxtimes \kc \subset \Db(X\times Y)$ is a universal phantom.

A down-to-earth proof of (2) can be given as follows. First note that $K_0(\ka\boxtimes\Db(Y))=0$ by the universality of $\ka$, so $\ka\boxtimes \Db(Y)$ is a phantom. To see that it is universal, we only need to check that 
\[K_0((\ka\boxtimes \Db(Y))\boxtimes\Db(Z))=0\quad \text{ for any }Z.\]
Recall that $(\ka\boxtimes \Db(Y))\boxtimes\Db(Z)$ is the smallest triangulated subcategory of $\Db(X\times Y\times Z)$ containing all objects of the form $\pi_{XY}^*(A)\otimes \pi_Z^*(F)$, where $A\in \ka\boxtimes\Db(Y)$ and $F\in \Db(Z)$, and closed under direct summands. Now, $\Db(Y\times Z)$ is split generated (that is, we get everything after taking direct summands) by elements of the form $p^*F\otimes q^*G$ (where $F\in \Db(X)$, $G\in \Db(Y)$), and $A$ arises as a direct summand of an object of the form $p^*(A')\otimes q^*G$ $(A'\in \ka$, $G\in \Db(Y)$), hence 
\[(\ka\boxtimes \Db(Y))\boxtimes\Db(Z)\cong \ka\boxtimes\Db(Y\times Z).\]
Therefore, the Grothendieck group of this category vanishes by the universality of $\ka$.
\end{proof}

One particular example of varieties satisfying the assumption of Proposition \ref{pGrothExc} are varieties with almost full exceptional sequences. One easy property these have is given by

\begin{proposition}\label{pHodgenumbers}
Let $X$ be a smooth complex projective variety such that $\Db(X) = \langle \ka,E_1,\ldots,E_r\rangle$, where $\ka$ is a phantom category and $E_i$ are exceptional. Then $h^{p,q}= 0$ if $p\neq q$.
\end{proposition}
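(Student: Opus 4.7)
The plan is to combine two standard inputs: additivity of Hochschild homology under semi-orthogonal decompositions and the Hochschild--Kostant--Rosenberg (HKR) isomorphism.

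First I would apply Kuznetsov's additivity theorem for Hochschild homology to the given decomposition, which yields
\[
HH_\bullet(X) \cong HH_\bullet(\ka) \oplus \bigoplus_{i=1}^r HH_\bullet(\langle E_i\rangle).
\]
Each admissible subcategory $\langle E_i\rangle$ generated by an exceptional object is equivalent to $\Db(\Spec k)$ and hence contributes exactly one copy of $k$ concentrated in degree $0$. Using that a phantom has trivial Hochschild homology --- the working convention recalled in the introduction --- we get $HH_\bullet(\ka)=0$, and therefore $HH_n(X)=0$ for $n\neq 0$ while $HH_0(X)\cong k^r$.

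Next I would invoke HKR, which over $\CC$ gives a canonical identification
\[
HH_n(X) \cong \bigoplus_{q-p=n} H^q(X,\Omega^p_X).
\]
Since the left-hand side vanishes for $n\neq 0$, each summand on the right with $q\neq p$ vanishes, which is exactly $h^{p,q}=0$ whenever $p\neq q$.

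The additivity theorem and HKR are formal, so the whole content of the argument is concentrated in the step $HH_\bullet(\ka)=0$. This is the point I expect to be the main obstacle: the formal definition in Subsection~\ref{sDerCat} asks only for $K_0(\ka)=0$, and the Chern character only forces vanishing of the image in $HH_0(\ka)\otimes\QQ$, not of all $HH_i(\ka)$. One therefore has to read "phantom" here in the stronger sense used in the examples surveyed in the introduction, where trivial Hochschild homology is part of the data; with that reading the proposition is essentially a book-keeping exercise combining Kuznetsov's additivity with HKR.
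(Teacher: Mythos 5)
Your argument is the same as the paper's: additivity of Hochschild homology over the semi-orthogonal decomposition, the fact that each exceptional object contributes a single copy of $\CC$ in degree $0$, and then HKR to translate the vanishing of $\mathrm{HH}_k(X)$ for $k\neq 0$ into $h^{p,q}=0$ for $p\neq q$. The one point where you hedge --- whether $\mathrm{HH}_\bullet(\ka)=0$ follows from the bare definition $K_0(\ka)=0$ --- does not actually require reading ``phantom'' in a stronger sense: by a theorem of Gorchinskiy and Orlov (Thm.\ 5.5 of their paper, recalled in the final section of this one), the vanishing of $K_0(\ka)\otimes\QQ$ already forces the vanishing of the Hochschild homology of an admissible subcategory, so your argument goes through with the definition as stated.
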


\begin{proof}
This follows from the Hochschild-Kostant-Rosenberg
isomorphism
\[\mathrm{HH}_k(X) =\bigoplus\limits_{q-p=k}H^{p,q}(X),\]
the fact that $\mathrm{HH}_\bullet(E_i)=\mathrm{HH}_0(E_i)=\CC$ and additivity of Hochschild homology on semi-orthogonal decompositions; see \cite{Kuz09} for the latter statement, where it is also explained how to define Hochschild homology of an admissible subcategory of $\Db(X)$.
\end{proof}

The next result strengthens \cite[Thm.\ 1.3]{Marc-Tab}, at least in the context of complex varieties. Recall that an admissible subcategory $\ka$ of $\Db(X)$ is a \emph{quasi-phantom category} if $K_0(\ka)$ is finite and $\mathsf{HH}_\bullet(\ka)=0$. Note that Proposition \ref{pHodgenumbers} holds for quasi-phantom categories as well.

\begin{proposition}\label{pPhantomChow}
Let $X$ be a smooth complex projective variety such that $\Db(X) = \langle \ka,E_1,\ldots,E_r\rangle$, where $\ka$ is a (quasi-)phantom category. Then the rational Chow motive of $X$ is of Lefschetz type.
\end{proposition}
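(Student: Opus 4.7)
The plan is to invoke the implication, mentioned in Section 2 and due to Marcolli-Tabuada (and strengthened in \cite{Bern-Tab}), that a rational $K$-motive of trivial type yields a rational Chow motive of Lefschetz type. Thus I reduce the proof to showing that $\mathrm{KM}(X)_\QQ$ is of trivial type.

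First I would assemble the basic numerical consequences of the hypotheses. Additivity of $K_0$ for the semi-orthogonal decomposition, together with the torsion hypothesis on $K_0(\ka)$, gives $K_0(X)_\QQ \cong \QQ^r$, and hence $\mathrm{CH}^\bullet(X)_\QQ \cong \QQ^r$ via the rational Chern character in characteristic zero. By Proposition \ref{pHodgenumbers}, $h^{p,q}(X) = 0$ for $p \neq q$, and additivity of Hochschild homology combined with $\mathrm{HH}_\bullet(\ka) = 0$ forces $\sum_p h^{p,p}(X) = r$. A dimension count then shows the cycle map $\mathrm{CH}^\bullet(X)_\QQ \to H^{2\bullet}(X, \QQ)$ is an isomorphism of $r$-dimensional $\QQ$-vector spaces, and in particular rational and homological equivalence coincide on $X$.

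Next, additivity of the $K$-motive along the SOD yields
\[ \mathrm{KM}(X) \cong \mathrm{KM}(\ka) \oplus \mathbf{1}^{\oplus r}, \]
since $\mathrm{KM}(E_i) \cong \mathbf{1}$ for each exceptional $E_i$. The remaining task is therefore to establish $\mathrm{KM}(\ka)_\QQ = 0$, equivalently (by the lemma in Section 2) the vanishing of $K_0(\ka \boxtimes \ka) \otimes \QQ$. By Künneth for Hochschild homology, $\mathrm{HH}_\bullet(\ka \boxtimes \ka) = 0$, and $X \times X$ still has Tate cohomology by the Künneth formula, so the Fourier-Mukai kernel realizing the projector onto $\ka$ has vanishing cohomology class on $X \times X$.

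The main obstacle I anticipate is the final step of upgrading this cohomological triviality to the desired rational Chow-theoretic vanishing. I would handle this via a Bloch-Srinivas-style decomposition of the diagonal of $X \times X$ exploiting the finite-dimensionality of each $\mathrm{CH}^i(X)_\QQ$, or alternatively by invoking the conservativity argument used in \cite{Marc-Tab}, which compares rational Chow motives and rational $K$-motives on the Lefschetz subcategory. Once this vanishing is in hand, $\mathrm{KM}(X)_\QQ \cong \mathbf{1}^{\oplus r}$ is of trivial type and the conclusion follows.
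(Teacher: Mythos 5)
Your opening reduction is sound and matches the paper's key numerical input: additivity of $K_0$ plus finiteness of $K_0(\ka)$ gives $K_0(X)\otimes\QQ\cong\QQ^r$, hence $\mathrm{CH}_\bullet(X)\otimes\QQ\cong\QQ^r$ via the Chern character. But the detour through $K$-motives that follows has a genuine gap at exactly the step you flag as the ``main obstacle.'' To conclude $\mathrm{KM}(\ka)_\QQ=0$ you need $K_0(\ka\boxtimes\ka)\otimes\QQ=0$, and neither of your proposed fixes closes this. The cohomological triviality of the Fourier--Mukai kernel on $X\times X$ can only be upgraded to rational Chow-theoretic vanishing if rational and homological equivalence agree on $X\times X$; finite-dimensionality of $\mathrm{CH}^i(X)_\QQ$ does not give finite-dimensionality of $\mathrm{CH}^i(X\times X)_\QQ$, since there is no K\"unneth formula for Chow groups in general --- that K\"unneth isomorphism is precisely what one deduces \emph{after} knowing $M(X)_\QQ$ is of Lefschetz type (this is Remark \ref{rRatGrothGr} in the paper, and the discussion after Proposition \ref{pPhan-uPhan} explicitly points out that the vanishing of $K_0(\ka\boxtimes\ka)$ is the subtle point, obtained there only as a consequence of the present proposition). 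The ``conservativity'' route via \cite{Marc-Tab} has the same problem: it requires the rational noncommutative motive of $\ka$ to vanish, which is again controlled by $K_0(\ka\boxtimes\ka)_\QQ$. So as written the argument is circular.

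The irony is that your fallback --- a Bloch--Srinivas-style decomposition of the diagonal exploiting finite-dimensionality of the $\mathrm{CH}^i(X)_\QQ$ --- applied to $X$ itself rather than to $X\times X$ \emph{is} the direct proof, and makes the entire $K$-motive detour unnecessary. This is how the paper argues: by \cite[Thm.\ 5]{Vial-proj}, all rational Chow groups of a smooth projective complex $n$-fold are finite-dimensional $\QQ$-vector spaces if and only if $M(X)_\QQ\cong\oplus_{i=0}^n(\mathbb{L}^i)^{\oplus b_{2i}}$; combined with $\mathrm{CH}_\bullet(X)\otimes\QQ\cong K_0(X)\otimes\QQ\cong\QQ^r$ this finishes the proof in one line. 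I recommend discarding everything after your first paragraph and citing Vial's theorem directly. (The Hodge-number computation via Proposition \ref{pHodgenumbers} is also not needed for the statement, though it is consistent with it.)
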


\begin{proof}
By \cite[Thm.\ 5]{Vial-proj}, all the rational Chow groups of a smooth projective complex $n$-dimensional variety are finite-dimensional $\QQ$-vector spaces if and only if the Chow motive of $X$ is of the form $M(X)=\oplus_{i=0}^n(\mathbb{L}^i)^{\oplus b_{2i}}$, where $b_{2i}$ is the $2i$-th Betti number. Since $K_0(X)\otimes \QQ\cong \QQ^r\cong \mathrm{CH}_\bullet(X)\otimes \QQ$, the claim follows.
\end{proof}

\begin{remark}\label{rRatGrothGr}
Note that under the assumptions of the proposition the rational Chow motive of $X\times X$ is again of Lefschetz type and \cite[Prop.\ 4.1(ii)]{Gor-Orl} implies that $K_0(X\times X)_\QQ\cong K_0(X)_\QQ\otimes K_0(X)_\QQ$. 
\end{remark}

Next we want to investigate when a phantom category is also a universal phantom. For this we will need the following

\begin{lemma}\label{lSubcat}
Let $X$ be a smooth projective variety over a field $k$ such that $\Db(X)=\langle \ka,E_1,\ldots,E_r \rangle$, where $\ka$ is a phantom category and $E_i$ are exceptional. Then $K_0(\ka\boxtimes E_i)=0$ for all $1\leq i\leq r$.
\end{lemma}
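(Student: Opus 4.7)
The plan is to realize a triangulated equivalence $\ka\boxtimes E_i\simeq\ka$, after which $K_0(\ka\boxtimes E_i)=K_0(\ka)=0$ follows at once from the phantom hypothesis. The heuristic is that the admissible subcategory $\langle E_i\rangle\subset\Db(X)$ generated by the exceptional object $E_i$ is equivalent to $\Db(\Spec k)$, and so boxing $\ka$ with $\langle E_i\rangle$ should do nothing up to equivalence.

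Concretely, I would introduce the exact functor
\[
\Phi\colon\ka\longrightarrow\Db(X\times X),\qquad A\longmapsto p^*A\otimes q^*E_i,
\]
and show it is fully faithful. The essential computation uses the standard K\"unneth formula for external tensor products on smooth projective varieties together with the exceptionality of $E_i$, giving
\[
\Hom^\bullet(\Phi(A),\Phi(A'))\simeq\Hom^\bullet(A,A')\otimes_k\Hom^\bullet(E_i,E_i)\simeq\Hom^\bullet(A,A').
\]

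It then remains to match the essential image of $\Phi$ with $\ka\boxtimes E_i$. By definition the latter is the smallest triangulated subcategory of $\Db(X\times X)$ containing all $\Phi(A)$ for $A\in\ka$ and closed under direct summands. Since $\Phi$ is exact and fully faithful, every morphism between image objects lifts to $\ka$, so the essential image is already triangulated; since $\ka$ is idempotent complete (being admissible in $\Db(X)$), the image is automatically summand-closed. Hence $\ka\boxtimes E_i$ coincides with the essential image of $\Phi$, which is equivalent to $\ka$, and we are done.

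The only substantive input is the K\"unneth-type isomorphism above, which is standard in this generality, so the rest is formal bookkeeping. As a conceptual alternative, one could apply the K-motive lemma from Section~2: that gives $K_0(\ka\boxtimes E_i)\simeq\Hom_{\mathrm{KM}(k)}(\mathrm{KM}(\langle E_i\rangle),\mathrm{KM}(\ka))\simeq\Hom_{\mathrm{KM}(k)}(\mathbf{1},\mathrm{KM}(\ka))$, and this last group is the summand of $\Hom_{\mathrm{KM}(k)}(\mathbf{1},\mathrm{KM}(X))\simeq K_0(X)$ corresponding to $\ka$, which is $K_0(\ka)=0$.
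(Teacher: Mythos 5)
Your argument is correct and follows essentially the same route as the paper: the paper's first proof also constructs the fully faithful functor $A\mapsto p_1^*A\otimes p_2^*E_i$ and verifies full faithfulness via the K\"unneth formula and the exceptionality of $E_i$, while its ``more conceptual proof'' is exactly your $K$-motive alternative. The only real difference is in the bookkeeping at the end: you identify $\ka\boxtimes E_i$ directly with the essential image of $\Phi$ (using exactness of $\Phi$ and idempotent completeness of the admissible subcategory $\ka$), whereas the paper first deduces $\ka\boxtimes\ko_X\cong\ka$ --- invoking Proposition \ref{pGrothExc} to know that $\ko_X$ is exceptional --- and then asserts that $\ka\boxtimes\ko_X$ and $\ka\boxtimes E_i$ are abstractly isomorphic; your version is marginally cleaner, as it avoids the detour through $\ko_X$ and hence the implicit appeal to a proposition stated only over $\CC$.
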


\begin{proof}
We will begin by checking that $\ka$ is a full subcategory of $\ka\boxtimes E$ for any exceptional object $E$.
Define $\Phi\colon \ka\to \ka\boxtimes E$ by $A\mapsto p_1^*(A)\otimes p_2^*E$ on objects and by sending a morphism $f$ to $p_1^*(f)\otimes \id$. Then:
\begin{align*}
\Hom(\Phi(A),\Phi(B))&\cong \Hom_{i+j=0} \Hom(A,B[i])\otimes \Hom(E,E[j])\\
&\cong \Hom(A,B).
\end{align*}
by the K\"unneth formula. 

By Proposition \ref{pGrothExc}, $\ko_X$ is an exceptional object. In particular, by the above argument, $\ka\boxtimes\ko_X\cong \ka$. Now, $\ka\boxtimes \ko_X$ and $\ka\boxtimes E_i$ are abstractly isomorphic as triangulated categories and, therefore, have the same Grothendieck group.

A more conceptual proof goes as follows. We know that 
\[\mathrm{KM}(\ka\boxtimes E_i)\cong \mathrm{KM}(\ka)\otimes\mathrm{KM}(E_i)\cong\mathrm{KM}(\ka)\otimes \mathbbm{1}\cong \mathrm{KM}(\ka) .\]
In particular, the Grothendieck groups of $\ka\boxtimes E_i$ and $\ka$ coincide, which is precisely the claim.
\end{proof}

\begin{proposition}\label{pPhan-uPhan}
Let $X$ be a smooth projective variety over a field $k$ such that $\Db(X)=\langle \ka,E_1,\ldots,E_r \rangle$, where $\ka$ is a phantom category and $E_i$ are exceptional. The following statements are equivalent.
\begin{enumerate}
\item $K_0(X\times X)\cong K_0(X)\otimes K_0(X)$.
\item $K_0(\ka\boxtimes \ka)=0$.
\item $\ka$ is a universal phantom category.
\end{enumerate}
\end{proposition}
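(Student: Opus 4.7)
The plan is to prove $(1)\Leftrightarrow(2)$ by a direct computation with the induced semi-orthogonal decomposition on $X\times X$, and then $(2)\Leftrightarrow(3)$ by translating the vanishing of $K_0(\ka\boxtimes\ka)$ into a statement about the $K$-motive $\mathrm{KM}(\ka)$, using the Lemma from Subsection 2.3 that identifies Hom-spaces in $\mathrm{KM}(k)$ with Grothendieck groups of box products.

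For $(1)\Leftrightarrow(2)$, I would first write down the induced semi-orthogonal decomposition
\[\Db(X\times X)=\langle \ka\boxtimes\ka,\; \ka\boxtimes E_i,\; E_j\boxtimes \ka,\; E_i\boxtimes E_j\rangle_{i,j}\]
(in an appropriate order) and pass to Grothendieck groups. By Lemma \ref{lSubcat}, applied symmetrically, each mixed piece $\ka\boxtimes E_i$ or $E_j\boxtimes\ka$ has trivial $K_0$. By the K\"unneth formula, each $p_1^*E_i\otimes p_2^*E_j$ is exceptional, so each $E_i\boxtimes E_j$ contributes a $\ZZ$-summand. Since $K_0(\ka)=0$, the classes $[E_1],\ldots,[E_r]$ form a basis of $K_0(X)\cong\ZZ^r$, so $K_0(X)\otimes K_0(X)\cong\ZZ^{r^2}$, and the natural external product map identifies this tensor product with the subsum $\bigoplus_{i,j}K_0(E_i\boxtimes E_j)$ inside $K_0(X\times X)$. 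Hence (1) is equivalent to the vanishing of the remaining summand $K_0(\ka\boxtimes\ka)$, that is, to (2).

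For $(2)\Leftrightarrow(3)$, apply the preliminary Lemma from Subsection 2.3 with both inputs equal to $\ka$ to obtain
\[\End_{\mathrm{KM}(k)}(\mathrm{KM}(\ka))=K_0(\ka\boxtimes\ka).\]
If (2) holds, this endomorphism ring is zero, so $\id_{\mathrm{KM}(\ka)}=0$ and therefore $\mathrm{KM}(\ka)=0$ in the additive category $\mathrm{KM}(k)$; by \cite[Prop.\ 4.4]{Gor-Orl}, already invoked in the proof of Proposition \ref{pEasyresults}, this is equivalent to $\ka$ being a universal phantom, giving (3). The converse $(3)\Rightarrow(2)$ is immediate, since a universal phantom has vanishing $K$-motive and hence vanishing endomorphism ring. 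The main content is therefore the identification $\End_{\mathrm{KM}(k)}(\mathrm{KM}(\ka))=K_0(\ka\boxtimes\ka)$ together with the Gorchinskiy--Orlov criterion characterising universal phantoms via vanishing of the $K$-motive; once these are granted, the remainder of the argument is routine bookkeeping with the induced decomposition on $X\times X$ and an application of Lemma \ref{lSubcat}.
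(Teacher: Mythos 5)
Your proof is correct and follows essentially the same route as the paper: both establish $(1)\Leftrightarrow(2)$ by passing to Grothendieck groups in the induced semi-orthogonal decomposition of $\Db(X\times X)$, killing the mixed pieces via Lemma \ref{lSubcat} and counting the $\ZZ$-summands coming from the exceptional objects, and both reduce $(2)\Leftrightarrow(3)$ to \cite[Prop.~4.4]{Gor-Orl}. The only cosmetic difference is in $(2)\Rightarrow(3)$: you deduce $\mathrm{KM}(\ka)=0$ from the vanishing of $\End_{\mathrm{KM}(k)}(\mathrm{KM}(\ka))=K_0(\ka\boxtimes\ka)$, a neat use of the Hom-formula lemma, whereas the paper first upgrades $(2)$ to $K_0(\ka\boxtimes\Db(X))=0$ and quotes the corresponding form of the Gorchinskiy--Orlov criterion.
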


\begin{proof}
Recall that there is a semi-orthogonal decomposition
\[\Db(X\times X)=\langle \ka\boxtimes\Db(X),E_1\boxtimes \Db(X),\ldots,E_r\boxtimes \Db(X)\rangle.\]

Note that 
\[\mathrm{KM}(E_i\boxtimes \Db(X))=\mathrm{KM}(E_i)\otimes \mathrm{KM}(\Db(X))=\mathrm{KM}(\Db(X)).\]
In particular, 
\[K_0(E_i\boxtimes \Db(X))=K_0(\Db(X)).\]
 
If $K_0(X)\cong \ZZ^r$ and $K_0(X\times X)\cong K_0(X)\otimes K_0(X)$, then $K_0(X\times X)\cong \ZZ^{r^2}$. Hence, $K_0(\ka\boxtimes \Db(X))=0$. In particular, $K_0(\ka\boxtimes\ka)=0$. 

Conversely, if $K_0(\ka\boxtimes \ka)=0$, then $K_0(\ka\boxtimes\Db(X))=0$ by Lemma \ref{lSubcat}. Since $K_0(E_i\boxtimes\Db(X))\cong \ZZ^r$, we get $K_0(X\times X)\cong K_0(X)\otimes K_0(X)$. Hence, (1) and (2) are equivalent.

For the implication ``(2)$\Rightarrow$(3)'' we use \cite[Prop.\ 4.4]{Gor-Orl}, which, in particular, states that $\ka\subset \Db(X)$ is universal if $K_0(\ka\boxtimes\Db(X))=0$. Since $K_0(\ka\boxtimes E_i)=0$, it is now clear that (2) implies (3). Conversely, if (3) holds, then $\mathrm{KM}(\ka\boxtimes\ka)=\mathrm{KM}(\ka)\otimes \mathrm{KM}(\ka)=0$, hence $K_0(\ka\boxtimes \ka)=0$.
\end{proof}

It seems plausible that the assumption on the Grothendieck groups in the proposition is, at least over $\CC$, automatic, since by Remark \ref{rRatGrothGr} the Grothendieck group of $\ka\boxtimes\ka$ is at most torsion. Surprisingly, it is rather difficult to check that the Grothendieck group of the latter category is indeed trivial. One idea one might have is to use results on Chow groups, but the transfer seems to be subtle. 

\begin{remark}
Let $X$ be as in the proposition. If $\ka$ is a universal phantom category, then $K_0(X\times Y)=K_0(X)\otimes K_0(Y)$ for all smooth projective $Y$.
\end{remark}

The next result deals with phantom categories on surfaces.

\begin{corollary}\label{cPhanUnivphan}
Let $S$ be a complex surface with $p_g=q=0$ which satisfies the Bloch conjecture and admits a phantom category $\ka$. Then $\ka$ is a universal phantom category. 
\end{corollary}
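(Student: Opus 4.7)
The plan is to apply Proposition \ref{pPhan-uPhan}: I would show that its condition (1), namely the K\"unneth-type isomorphism $K_0(S\times S)\cong K_0(S)\otimes K_0(S)$, holds. (I read the hypothesis ``$S$ admits a phantom $\ka$'' in the setting of the proposition, that is, as part of an s.d.\ $\Db(S)=\langle\ka,E_1,\ldots,E_r\rangle$; this is the context of all known phantom examples on surfaces, and it is what is needed for the equivalences in Proposition \ref{pPhan-uPhan}.)

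First I would unpack the Chow-theoretic consequences of the hypothesis. Since $q=0$ forces $\mathrm{Alb}(S)=0$, Bloch's conjecture (available because $p_g=0$) yields $\mathrm{CH}_0(S)_{\mathrm{hom}}=0$, and Roitman's theorem rules out torsion, so $\mathrm{CH}^2(S)=\mathrm{CH}_0(S)\cong\ZZ$. Together with $\mathrm{CH}^0(S)=\ZZ$ and the finitely generated $\mathrm{CH}^1(S)=\mathrm{NS}(S)$ (finite generation again coming from $q=0$), this makes $K_0(S)$ a finitely generated abelian group (so Proposition \ref{pGrothExc} is applicable and reconfirms that $\ko_S$ is exceptional).

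Next I would promote this to a motivic statement: the rational Chow motive of $S$ is of Lefschetz type, $M(S)_\QQ\cong\mathbbm{1}\oplus\mathbb{L}^{\oplus\rho}\oplus\mathbb{L}^{\otimes 2}$. This follows from the Bloch--Srinivas decomposition of the diagonal, available once $\mathrm{CH}_0(S)_{\mathrm{hom}}=0$, together with $H^{2,0}=0$ to ensure the middle piece is purely algebraic. By \cite[Prop.\ 4.2]{Gor-Orl} the rational $K$-motive is then of trivial type, and so $K_0(S\times S)_\QQ\cong K_0(S)_\QQ\otimes K_0(S)_\QQ$ (cf.\ Remark \ref{rRatGrothGr}). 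Hence $K_0(\ka\boxtimes\ka)$ is at most torsion.

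The main obstacle is to rule out this torsion, that is, to lift the trivial-type conclusion for $\mathrm{KM}(S)$ from $\QQ$ to $\ZZ$. My plan is to exhibit integral idempotents realizing the decomposition: the vanishing $\mathrm{CH}_0(S)_{\mathrm{hom}}=0$ is an integral statement, and the resulting integral Bloch--Srinivas decomposition of $\Delta_S$ should produce projectors in $K_0(S\times S)$ which split $\mathrm{KM}(S)$ into copies of $\mathbbm{1}$ over $\ZZ$. Since direct summands of $\mathbbm{1}^{\oplus r}$ in $\mathrm{KM}(k)$ correspond to idempotents in $M_r(\ZZ)$, and since $K_0(-)=\Hom_{\mathrm{KM}(k)}(\mathbbm{1},-)$ detects their rank, the assumption $K_0(\ka)=0$ then forces $\mathrm{KM}(\ka)=0$. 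This is equivalent to condition (1) of Proposition \ref{pPhan-uPhan} holding integrally, and that proposition concludes that $\ka$ is universal. The delicate point is the integral splitting in the presence of possible torsion in $\mathrm{NS}(S)$, which is where the argument is most at risk and may require additional input.
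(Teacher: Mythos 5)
Your overall strategy coincides with the paper's: reduce to condition (1) of Proposition \ref{pPhan-uPhan}, i.e.\ to the integral K\"unneth isomorphism $K_0(S\times S)\cong K_0(S)\otimes K_0(S)$. The paper's proof is a one-liner: it cites \cite[Rem.\ 3.2]{Gor-Orl} for exactly this isomorphism and then applies the proposition. Your rational preamble (Lefschetz type over $\QQ$, hence $K_0(\ka\boxtimes\ka)$ at most torsion) is correct but is not actually needed once the integral statement is in hand.

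The genuine gap is precisely the step you flag yourself. The integral Bloch--Srinivas decomposition of $\Delta_S$ coming from $\mathrm{CH}_0(S)_{\mathrm{hom}}=0$ does \emph{not} produce a splitting of $M(S)$, let alone of $\mathrm{KM}(S)$, into Lefschetz/trivial pieces over $\ZZ$: as the paper records in a later remark (quoting \cite[Prop.\ 2.2]{Gor-Orl}), for such a surface one only gets $M(S)=\mathbbm{1}\oplus\mathbb{L}^{\oplus\rho}\oplus\mathbb{L}^{\otimes 2}\oplus T$ integrally, where $T$ is a torsion motive with $\mathrm{CH}^1(T)\cong\mathrm{Pic}(S)_{\mathrm{tors}}$, and this piece is killed only after inverting the order of the torsion. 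So your proposed integral idempotents do not exist on the Chow side in general, and the formal argument ``summands of $\mathbf{1}^{\oplus r}$ with vanishing $K_0$ are zero'' (which is fine in itself) has nothing to be applied to yet. What rescues the statement is that the K\"unneth map for $K_0$ is nevertheless an integral isomorphism for products of surfaces with $p_g=q=0$ satisfying Bloch; this is the content of \cite[Rem.\ 3.2]{Gor-Orl} (building on their Prop.\ 3.1 on Chow groups of such products), and it requires a separate argument handling the torsion in $\mathrm{Pic}(S)$ rather than following from a motivic splitting over $\ZZ$. With that input your proof closes up and is the paper's proof; without it, the torsion in $K_0(\ka\boxtimes\ka)$ is not ruled out.
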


\begin{proof}
By \cite[Rem.\ 3.2]{Gor-Orl}, $K_0(S\times S)\cong K_0(S)\otimes K_0(S)$. It remains to apply Proposition \ref{pPhan-uPhan}.
\end{proof}

\begin{corollary}
Let $S$ be a generic complex determinantal Barlow surface which by \cite{BBKS} admits a semi-orthogonal decomposition of the form $\Db(S)=\langle \ka,\kl_1,\ldots,\kl_{11}\rangle$, where the $\kl_i$ are line bundles and $\ka$ is a phantom category. Then $\ka$ is a universal phantom category. Similarly, if $X_9(2,3)$ is the Dolgachev surface considered in \cite{Cho-Lee}, then the phantom it admits is universal.
Furthermore, the derived categories of the Hilbert schemes $S^{[n]}$ and $(X_9(2,3))^{[n]}$ of $n$ points on these surfaces admit a universal phantom. 
\end{corollary}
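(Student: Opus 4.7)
The plan is to verify the hypotheses of Corollary \ref{cPhanUnivphan} for the Barlow and Dolgachev surfaces, and then transport the resulting universal phantom to the Hilbert schemes via Proposition \ref{pEasyresults}(1).

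For the first two assertions, I would check the hypotheses of Corollary \ref{cPhanUnivphan}: both the generic determinantal Barlow surface $S$ and the Dolgachev surface $X_9(2,3)$ are smooth complex surfaces of general type with $p_g=q=0$ by construction (this is also implicit in the references \cite{BBKS} and \cite{Cho-Lee}, which exhibit the semi-orthogonal decompositions in question). The remaining input is the Bloch conjecture: for Barlow surfaces this is a classical result of Voisin, and for the Dolgachev surface $X_9(2,3)$ it has also been established in the literature on surfaces of general type with $p_g=0$. Granting these inputs, Corollary \ref{cPhanUnivphan} applies directly and yields that $\ka$ is a universal phantom category in each case.

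For the Hilbert scheme statement, the key ingredient is a fully faithful functor $\Db(S)\to\Db(S^{[n]})$, which for any smooth projective surface $S$ can be constructed as a Fourier--Mukai transform whose kernel is built out of the universal family/tautological sheaves on $S^{[n]}$ (as in the work of Krug and Krug--Sosna on functors between derived categories of Hilbert schemes of points). Once such an embedding is in place, Proposition \ref{pEasyresults}(1) applies verbatim: the universal phantom $\ka\subset\Db(S)$ established in the previous paragraph is sent to a universal phantom subcategory of $\Db(S^{[n]})$. The same argument with $S$ replaced by $X_9(2,3)$ handles the second Hilbert scheme.

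The main obstacle, as usual in statements of this kind, is locating and citing the correct references rather than any new mathematical content: on the one hand the verification of the Bloch conjecture for the specific surfaces (this has been attributed to Voisin and to subsequent work in the quasi-phantom/phantom literature), and on the other hand the existence of the fully faithful Fourier--Mukai embedding $\Db(S)\hookrightarrow\Db(S^{[n]})$. Once these two inputs are in place, the corollary is essentially a formal consequence of Corollary \ref{cPhanUnivphan} and Proposition \ref{pEasyresults}(1).
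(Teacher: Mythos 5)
Your proposal follows the paper's proof essentially verbatim: the first two claims are immediate from Corollary \ref{cPhanUnivphan} (the Bloch conjecture for Barlow surfaces is Voisin's theorem, and for the Dolgachev surface it holds classically since its Kodaira dimension is $1$), and the Hilbert scheme claim follows from the fully faithful Fourier--Mukai transform with kernel the ideal sheaf of the universal family, \cite[Thm.\ 1.2]{KS}, combined with Proposition \ref{pEasyresults}(1). The only slip is calling $X_9(2,3)$ a surface of general type---Dolgachev surfaces have Kodaira dimension $1$---but this is immaterial, since Corollary \ref{cPhanUnivphan} only requires $p_g=q=0$ and the Bloch conjecture.
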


\begin{proof}
The first claim is immediate from the previous corollary.

As for the second claim: The Fourier-Mukai transform
\[\mathrm{FM}_\ki\colon \Db(S)\to \Db(S^{[n]})\]
whose kernel is the ideal sheaf $\ki$ of the universal family, is fully faithful by \cite[Thm.\ 1.2]{KS}. The same reasoning holds for \[\mathrm{FM}_\ki\colon \Db(X_9(2,3))\to \Db((X_9(2,3))^{[n]}).\]
It remains to apply Proposition \ref{pEasyresults}. 
\end{proof}

\begin{remark}\label{rPhantomHighDim}
Recall that the structure sheaf of an Enriques surface $Z$ is an exceptional object. Therefore, considering $\PP^2\times S$, $Z\times X_9(2,3)$, $X_9(2,3)\times X_9(2,3)$, $X_9(2,3)\times S$ and $S\times S$, using that $\kappa(X\times Y)=\kappa(X)+\kappa(Y)$ and Lemma \ref{lSubcat}, one can produce fourfolds of Kodaira dimension $-\infty,1,2,3,4$ having a phantom category.

Note that taking the product of $\PP^1$ with any surface containing a phantom gives a threefold of Kodaira dimension $-\infty$ containing a phantom.
\end{remark}

The following is a partial strengthening of \cite[Thm.\ 2.7]{Vial}. The assumption on the field is needed to ensure that $\mathrm{Spec}(K)$ is smooth projective over $k$ for any field extension $K/k$.

\begin{proposition}\label{pPhantom-Lefschetz}
Let $S$ be a smooth projective surface over a perfect field $k$. Assume that $\Db(S)=\langle \ka,E_1,\ldots,E_r\rangle$, where $\ka$ is a universal phantom category and all the $E_i$ are exceptional. Then the integral Chow motive of $S$ is of Lefschetz type, that is, of the form $\mathbbm{1}\oplus \mathbb{L}^{\oplus r}\oplus \mathbb{L}^{\otimes 2}$.
\end{proposition}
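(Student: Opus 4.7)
The plan is to apply \cite[Thm.\ 2.7]{Vial}, which for a smooth projective surface gives an integral Lefschetz decomposition of the Chow motive once one knows that all integral Chow groups of every base change $S_L$ are free of finite rank. The universal phantom hypothesis is essentially a $K$-theoretic version of that statement, so the proof splits into a base-change step (propagating the universal phantom property to every extension $L/k$) and a Chow-theoretic step (turning $K_0(S_L)\cong\ZZ^r$ into integral freeness of each $\mathrm{CH}^i(S_L)$).

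I would first verify that $\ka_L\subset \Db(S_L)$ is again a phantom for every extension $L/k$. Since $k$ is perfect, $\Spec L$ arises as a filtered limit of smooth affine $k$-schemes: reduce to finitely generated subextensions and factor each as a smooth transcendence base followed by a separable algebraic step. Combining Proposition~\ref{pEasyresults}(2) with continuity of $K_0$ under such limits gives $K_0(\ka_L)=0$ and hence $K_0(S_L)\cong \ZZ^r$. The second step extracts the individual Chow groups of $S_L$: $\mathrm{CH}^0(S_L)\cong\ZZ$ is automatic, $\mathrm{CH}^1(S_L)=\NS(S_L)$ is torsion-free of finite rank by \cite[Lem.\ 2.2]{GKMS} applied to $K_0(S_L)$, and for $\mathrm{CH}_0(S_L)$ I would argue as follows. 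Proposition~\ref{pGrothExc} applied to $S_{\overline{L}}$ shows that $\ko_{S_{\overline{L}}}$ is exceptional, so its Albanese vanishes; Roitman's theorem then forces the torsion subgroup of $\mathrm{CH}_0(S_L)$ to be trivial, and the rank constraint coming from $K_0(S_L)\cong\ZZ^r$ promotes this to $\mathrm{CH}_0(S_L)\cong\ZZ$. With every Chow group of every $S_L$ free of finite rank, Vial's theorem delivers the claimed decomposition of $M(S)$ over $\ZZ$.

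The main obstacle is integrality: rational Lefschetz type would already follow from Proposition~\ref{pPhantomChow} after base change, so the new content is ruling out torsion artifacts in the decomposition. This is precisely why the argument is surface-specific and why the universal (rather than plain) phantom hypothesis is crucial: the universal hypothesis guarantees that $\ko_{S_L}$ remains exceptional for every $L$, letting Roitman's theorem be applied uniformly to produce integral torsion-freeness of $\mathrm{CH}_0(S_L)$. A subsidiary subtlety is checking that perfectness of $k$ really does propagate the universal phantom through inseparable and transcendental base changes; Proposition~\ref{pEasyresults}(2) handles the case of products with smooth projective varieties, but pushing this to arbitrary field extensions requires a limit argument that should be spelled out carefully.
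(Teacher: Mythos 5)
Your strategy diverges from the paper's, which is essentially a two-line reduction: Vial's proof of \cite[Thm.\ 2.7]{Vial} uses fullness of the exceptional collection only to guarantee that the base-change map $K_0(S)\to K_0(S_K)$ is surjective for every extension $K/k$, and the universal phantom hypothesis supplies exactly this input, since $K_0(\ka\boxtimes\Db(\mathrm{Spec}K))=0$ forces $K_0(S)\cong K_0(S_K)$. You instead try to re-derive the Chow-theoretic content of Vial's theorem from $K_0(S_L)\cong\ZZ^r$, and the $\mathrm{CH}_0$ part of that derivation has two genuine gaps. First, torsion-freeness of $\mathrm{CH}_0(S_L)$ does not descend from $\overline{L}$: the kernel of $\mathrm{CH}_0(S_L)\to\mathrm{CH}_0(S_{\overline{L}})$ is a torsion group (killed by $[L'\!:\!L]$ on each finite subextension via the transfer), so Roitman's theorem applied over $\overline{L}$ says nothing about torsion classes that die under base change; moreover, Proposition \ref{pGrothExc} and the Roitman/Albanese argument in its proof are stated and proved only over $\CC$, so invoking them over $\overline{L}$ for a general perfect field $k$ is not justified. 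Second, ``torsion-free of finite rank'' does not promote to $\mathrm{CH}_0(S_L)\cong\ZZ$ over a non-closed field: a torsion-free abelian group of finite rank need not even be finitely generated, and nothing formal forces the degree-zero part $A_0(S_L)$ to vanish integrally. Killing $A_0(S_L)$ for every $L$ is precisely the hard content of Vial's argument, which runs through the filtration of $K_0$ by codimension of support and Riemann--Roch without denominators, with the $K_0$-surjectivity as the essential input. Your paraphrase of the theorem's hypotheses as ``all integral Chow groups of every base change are free of finite rank'' both misstates what Vial assumes and, even if granted, would still require an argument to produce the integral projectors.

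On the positive side, your worry about propagating the universal phantom property to arbitrary (non-finite) extensions $\mathrm{Spec}\,L$ is legitimate --- the definition only quantifies over smooth projective $Y$, and the paper glosses over this point --- and a filtered-limit/continuity-of-$K_0$ argument of the kind you sketch is the right way to make that step precise. But it does not rescue the two $\mathrm{CH}_0$ steps above; the fix is to follow the paper and feed $K_0(S)\cong K_0(S_L)$ directly into Vial's existing proof rather than reconstructing its conclusion from the Chow-group side.
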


\begin{proof}
The statement in \cite{Vial} concerns full exceptional collections. A close inspection of the proof shows that the fullness is only used to ensure that the base change $K_0(S)\to K_0(S_K)$ is surjective for any field extension $K/k$. But since by our assumption $\ka$ is a universal phantom category, $K_0(\ka\boxtimes\Db(\mathrm{Spec}K))=0$, hence $K_0(S)\cong K_0(S_K)$.
\end{proof}

\begin{corollary}
The Chow motive of a complex generic determinantal Barlow surface or of the Dolgachev surface $X_9(2,3)$ is of Lefschetz type over $\ZZ$.\qqed
\end{corollary}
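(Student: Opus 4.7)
The plan is to assemble this corollary directly from the two immediately preceding results, with no further geometric input needed. First I would note that both surfaces in question ($S$ a generic complex determinantal Barlow surface, and the Dolgachev surface $X_9(2,3)$) come equipped with semi-orthogonal decompositions of the form $\Db(S) = \langle \ka, \kl_1, \ldots, \kl_m\rangle$ with the $\kl_i$ line bundles (hence exceptional) and $\ka$ a phantom; these are the results of \cite{BBKS} and \cite{Cho-Lee} recalled in the previous corollary.

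Second, I would invoke the previous corollary to upgrade $\ka$ from a phantom to a universal phantom in each case. This is legitimate because both surfaces are complex surfaces with $p_g = q = 0$ satisfying the Bloch conjecture (this is exactly the hypothesis used in the previous corollary, which relies on Corollary \ref{cPhanUnivphan}).

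Third, I would apply Proposition \ref{pPhantom-Lefschetz}: both $S$ and $X_9(2,3)$ are smooth projective surfaces over $\CC$, which is perfect, and they admit a decomposition of the required form $\langle \ka, E_1, \ldots, E_r\rangle$ with $\ka$ a universal phantom and the $E_i$ exceptional. The proposition then yields that the integral Chow motive is of Lefschetz type, $\mathbbm{1} \oplus \mathbb{L}^{\oplus r} \oplus \mathbb{L}^{\otimes 2}$ (with $r = 9$ for the Barlow surface, matching the middle Betti number, and similarly for $X_9(2,3)$).

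There is no real obstacle here, since the corollary is designed as a direct application: the content sits entirely in the earlier results (especially Proposition \ref{pPhantom-Lefschetz} and the chain Corollary \ref{cPhanUnivphan} $\Rightarrow$ previous corollary). The only thing worth checking is that the exceptional parts of the two decompositions indeed consist of exceptional objects (line bundles are exceptional once $H^{>0}(S,\ko_S) = 0$, which holds by $p_g = q = 0$), so that Proposition \ref{pPhantom-Lefschetz} is genuinely applicable.
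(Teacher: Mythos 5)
Your proposal is correct and is exactly the argument the paper intends: the corollary is stated with just a QED box because it follows immediately by combining the preceding corollary (which upgrades the phantoms on the generic determinantal Barlow surface and on $X_9(2,3)$ to universal phantoms via the Bloch conjecture) with Proposition \ref{pPhantom-Lefschetz}. Your extra checks (perfectness of $\CC$, exceptionality of the line bundles) are fine, and the only nitpick is notational: the exponent of $\mathbb{L}$ in the middle summand is the Picard number ($9$ for the Barlow surface), while the collection itself has length $11$, so the letter $r$ is being used in two slightly different ways.
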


\begin{remark}
The Chow motive of \emph{any} Barlow surface $S$ can be shown to be of Lefschetz type over $\ZZ$. For instance, since the Chow group of zero-cycles on $S$ is universally trivial by \cite[Cor.\ 2.2]{Voi-univ}, the statement follows by \cite[Thm.\ 4.1]{Tot-motive}. 

A different proof can be given as follows. If $S$ is a smooth projective complex surface with $p_g=q=0$ which satisfies the Bloch conjecture, then, by \cite[Prop.\ 2.2]{Gor-Orl}, its Chow motive is of the form
\[M(S)=\mathbbm{1}\oplus \mathbb{L}^{\oplus \rho(S)}\oplus \mathbb{L}^{\otimes 2}\oplus T,\]
where the first Chow group of $T$ is isomorphic to the torsion in $\mathrm{Pic}(S)$ and all its other Chow groups vanish. In particular, if we invert any number $N$ such that the order of $N\cdot\mathrm{Pic}(S)_{\text{tors}}=0$, then over $\mathbb{Z}_{1/N}$ the Chow motive of $S$ is of Lefschetz type; see \cite[Prop.\ 2.3]{Gor-Orl}. It remains to note that the Bloch conjecture holds for a Barlow surface by \cite{Voi-bloch} and set $N=1$.
\end{remark}

\begin{remark}
Let $S$ be a generic determinantal Barlow surface and $S'$ be any surface of general type with $p_g=q=0$ having a quasi-phantom category $\ka'$. Then $\ka\boxtimes\ka'\subset \Db(S\times S')$ is a universal phantom category. This follows immediately from \cite[Thm.\ 1.14]{Gor-Orl}. Note that if we consider $S\times S$, then $\Db(S\times S)$ contains 
\[\langle\ka\boxtimes \ka,\ka\boxtimes E_1,\ldots,\ka\boxtimes E_{11},E_1\boxtimes \ka,\ldots,E_{11}\boxtimes\ka\rangle\]
as an admissible subcategory and this category is a phantom. But the components are as well, hence a phantom category can have admissible subcategories. This fact should be related to the \emph{Noetherianity conjecture} for admissible subcategories, namely the statement that any descending chain of admissible subcategories $\ka_1\supset\ka_2\supset\ldots$ of $\Db(X)$ terminates.
\end{remark}

The following result should be compared to Proposition \ref{pPhantom-Lefschetz}. 

\begin{proposition}
Let $X$ be a smooth projective complex variety of dimension $n$ such that there exists a semi-orthogonal decomposition 
\[\Db(X)=\langle \ka,E_1,\ldots,E_r\rangle,\]
where $E_i$ are exceptional objects and $\ka$ is a universal phantom category. Then the Chow motive of $X$ is of Lefschetz type over $\ZZ[1/(2n!)]$.
\end{proposition}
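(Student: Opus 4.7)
The plan is to reduce the statement to the Bernardara--Tabuada theorem recalled at the end of Section 2: if the $K$-motive of a smooth projective $n$-dimensional variety is of trivial type over a (suitable) ring $R$, then its Chow motive is of Lefschetz type over $R[1/(2n)!]$. So it suffices to show that $\mathrm{KM}(X)$ is of trivial type over $\ZZ$ and then invoke \cite[Thm.\ 1.4]{Bern-Tab} with $R = \ZZ$.

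First I would use the additivity of $K$-motives with respect to semi-orthogonal decompositions (discussed in Section 2, following \cite{Gor-Orl}) to get
\[\mathrm{KM}(X) \;\cong\; \mathrm{KM}(\ka) \,\oplus\, \bigoplus_{i=1}^{r} \mathrm{KM}(E_i).\]
Each exceptional object $E_i$ generates a subcategory equivalent to $\Db(\Spec k)$, hence $\mathrm{KM}(E_i) \cong \mathbbm{1}$. On the other hand, by \cite[Prop.\ 4.4]{Gor-Orl}, $\ka$ is a universal phantom precisely when $\mathrm{KM}(\ka) = 0$. Combining these two facts yields $\mathrm{KM}(X) \cong \mathbbm{1}^{\oplus r}$, i.e.\ the $K$-motive of $X$ is of trivial type over $\ZZ$.

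Next I would apply \cite[Thm.\ 1.4]{Bern-Tab} directly to this conclusion: since $\mathrm{KM}(X)$ is of trivial type over $\ZZ$, the Chow motive $M(X)$ is of Lefschetz type over $\ZZ[1/(2n)!]$. The only point requiring a small verification is that $R = \ZZ$ satisfies the hypotheses on the coefficient ring imposed in \cite[Thm.\ 1.4]{Bern-Tab}, but this is a general condition on $R$ and holds trivially for $\ZZ$.

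The main obstacle here is really conceptual rather than technical: the whole argument is packaged into the Bernardara--Tabuada theorem, and the work is in checking that the semi-orthogonal decomposition together with universality of the phantom does indeed force the $K$-motive to be of trivial type over the integers (not merely rationally). That step is clean thanks to Proposition \ref{pEasyresults} and \cite[Prop.\ 4.4]{Gor-Orl}, which identify universal phantoms with those admissible subcategories whose $K$-motive vanishes; without universality one would only obtain $\mathrm{KM}(X)_\QQ \cong \mathbbm{1}_\QQ^{\oplus r}$, which is exactly the weaker rational statement of Proposition \ref{pPhantomChow}.
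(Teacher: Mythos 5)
Your proposal is correct and follows essentially the same route as the paper: vanishing of $\mathrm{KM}(\ka)$ via \cite[Prop.\ 4.4]{Gor-Orl}, additivity of $K$-motives over the semi-orthogonal decomposition to conclude $\mathrm{KM}(X)$ is of trivial type over $\ZZ$, and then \cite[Thm.\ 1.4]{Bern-Tab}. The paper's proof is exactly this argument, stated slightly more tersely.
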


\begin{proof}
By \cite[Prop.\ 4.4]{Gor-Orl}, the $K$-motive of $\ka$ is trivial. Since $K$-motives are additive with respect to semi-orthogonal decompositions, it follows that the $K$-motive of $X$ is of trivial type. This then remains true over $\ZZ[1/2n!]$. By \cite[Thm.\ 1.4]{Bern-Tab}, this implies that the Chow motive of $X$ is of Lefschetz type over $\ZZ[1/2n!]$. 
\end{proof}

\begin{remark}
It is an interesting question whether in the above situation the Chow motive of $X$ is of Lefschetz type over $\ZZ$. Note that in general even if the $K$-motive of a variety $X$ is of trivial type over $\ZZ$, as is the case in the proposition, the Chow motive of $X$ need not be of Lefschetz type over $\ZZ$, see \cite[Prop.\ 1.7]{Bern-Tab}. However, the example given there does not work over an algebraically closed field.
\end{remark}

\section{Possible further directions}

Recall that if $\kc$ is an admissible subcategory of $\Db(X)$, the composition $\Phi=i\circ i^R$ of the inclusion and its right adjoint, is a Fourier-Mukai functor by \cite[Thm.\ 7.1]{Kuzbase}. Call the kernel $K$ and $i^R$ the projection to $\kc$. Hochschild cohomology of $\kc$ is defined by $\mathsf{HH}^*(\kc)=\Ext^*(K,K)$. If $\ka\subset \Db(X)$ and $\ka'\subset \Db(X')$ are admissible, then the kernel of the projection functor to $\ka\boxtimes \ka'$ is the convolution of the kernels of the projection functors to $\ka$ and $\ka'$, hence, by the K\"unneth formula, 
\[\mathsf{HH}^*(\ka\boxtimes\ka')=\mathsf{HH}^*(\ka)\otimes \mathsf{HH}^*(\ka').\]

In particular, if the restriction maps 
\[\mathsf{HH}^k(X)\to \mathsf{HH}^k(\ka) \quad\text{ and } \quad \mathsf{HH}^l(X')\to\mathsf{HH}^l(\ka')\]
 are isomorphisms for $k\leq \alpha$ and $l\leq \beta$, respectively, then 
 \[\mathsf{HH}^k(X\times X')\cong \oplus_{i+j=k}\mathsf{HH}^i(\ka)\otimes \mathsf{HH}^j(\ka')\]
and for $k\leq \min(\alpha,\beta)$ the Hochschild cohomology of $\ka$ and $\ka'$ determines that of $X\times X'$.

If $\ka$ and $\ka'$ are (quasi-)phantoms appearing as complements to exceptional collections on surfaces, we can sometimes get better results. For this we need to recall some notions.

Let $(E_1,\ldots,E_n)$ be an exceptional collection on a variety $X$. Its \emph{anticanonical pseudoheight} is defined as
\begin{align*}
\mathrm{ph}_{\mathrm{ac}}(E_1,\ldots,E_n)&=\min\limits_{1\leq a_0<a_1<\ldots <a_p\leq n}\big(e(E_{a_0},E_{a_1})+\ldots \\
&+e(E_{a_{p-1}},E_{a_p})+e(E_{a_p},E_{a_0}\otimes \omega_X^{-1})-p\big),
\end{align*}
where $e(F,G)=\min\{p\in \ZZ\mid \Hom^p(F,G)\neq 0\}$. Let us call the numbers over which the minimum is taken the \emph{length-sum} of a $p$-chain. The \emph{pseudoheight} of the collection is 
\[\mathrm{ph}(E_1,\ldots,E_n)=\mathrm{ph}_{\mathrm{ac}}(E_1,\ldots,E_n)+\dim(X).\]

The \emph{(anti-)canonically extended collection} is defined by 
\[(E_1,\ldots,E_n,E_{n+1},\ldots,E_{2n}):=(E_1,\ldots,E_n,E_1\otimes \omega_X^{-1},\ldots,E_n\otimes\omega_X^{-1}).\]

The extended collection is called \emph{Hom-free} if $\Hom^p(E_i,E_j)=0$ for all $p\leq 0$ and for all $1\leq i<j\leq i+n$.

\begin{lemma}
Let $(E_1,\ldots,E_n)$ and $(F_1,\ldots,F_m)$ be exceptional collections on $X$ and $Y$, respectively. If both extended collections are Hom-free, then the collection $(E_1\boxtimes F_1,\ldots, E_n\boxtimes F_m)$ on $X\times Y$ is also Hom-free. The pseudoheight of the collection on the product is at least $1+\dim(X)+\dim(Y)$.
\end{lemma}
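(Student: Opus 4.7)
The plan is to reduce everything to the K\"unneth formula: for $A, A' \in \Db(X)$ and $B, B' \in \Db(Y)$,
\[
\Hom^p(A \boxtimes B, A' \boxtimes B') \cong \bigoplus_{s+t=p} \Hom^s(A, A') \otimes \Hom^t(B, B'),
\]
and since $\omega_{X\times Y}^{-1} \cong \omega_X^{-1} \boxtimes \omega_Y^{-1}$, the function $e$ is additive under box product whenever both factor contributions are finite. I would order the product collection $(E_a \boxtimes F_b)$ lexicographically and verify Hom-freeness of the extension by a case analysis on pairs of positions $i < j \leq i + nm$. When both endpoints lie in the original half, K\"unneth combined with Hom-freeness of one factor (and semi-orthogonality of the other) kills every nonzero summand in degrees $p \leq 0$; the case of both endpoints in the twisted half reduces to this one because the twist cancels in $\Hom$. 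When the source is in the original half and the target in the twisted half, the constraint $j \leq i + nm$ forces the lex-index $(a', b')$ of the untwisted target to satisfy $(a', b') \leq (a, b)$. Hom-freeness on $X$ then gives $\Hom^s(E_a, E_{a'} \otimes \omega_X^{-1}) = 0$ for $s \leq 0$, so only K\"unneth summands with $s \geq 1$ (hence $t \leq -1$ when $p \leq 0$) can survive. The remaining vanishing of $\Hom^t(F_b, F_{b'} \otimes \omega_Y^{-1})$ for $t \leq -1$ follows from Hom-freeness on $Y$ when $b' \leq b$, and from $\Ext^t = 0$ for $t < 0$ in the residual sub-case $b' > b$, where the $F$-pair sits outside the Hom-free window.

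For the pseudoheight bound I would restrict attention to chains with finite length-sum, since any other chain contributes $+\infty$ and is irrelevant to the minimum; K\"unneth additivity of $e$ together with the exceptionality of the factors forces both projected sequences $(a_k)$ and $(b_k)$ to be non-decreasing. Decomposing the length-sum as $\ell_E + \ell_F - p$ with
\[
\ell_E = \sum_{k=1}^p e(E_{a_{k-1}}, E_{a_k}) + e(E_{a_p}, E_{a_0} \otimes \omega_X^{-1})
\]
and $\ell_F$ defined analogously, strict lex increase at each step forces at least one of $a$ or $b$ to strictly increase, which contributes $\geq 1$ to the respective step-sum by Hom-freeness of the corresponding factor; summed over $p$ steps this yields $\geq p$. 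Each of the two wraparound terms lies in the Hom-free window of its extended factor collection (using $a_0 \leq a_p$ and $b_0 \leq b_p$, the relevant distances in the respective extensions are in $[1,n]$ and $[1,m]$), contributing $\geq 2$ in aggregate. Thus length-sum $\geq p + 2 - p = 2$, so $\mathrm{ph}_{\mathrm{ac}} \geq 2$ and $\mathrm{ph} \geq 2 + \dim(X\times Y)$, which is stronger than the asserted lower bound.

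The main obstacle I expect is the residual sub-case in the Hom-freeness analysis: when $a' < a$ and $b' > b$, the relevant pair on $Y$ falls strictly outside the Hom-free window, and one has to appeal to the vanishing of negative $\Ext$-groups, which is only automatic when the exceptional objects are (shifts of) coherent sheaves. This covers the settings of interest in the paper.
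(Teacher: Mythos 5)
Your proposal is correct, and it diverges from the paper in an instructive way on both halves. For the Hom-freeness statement the paper simply says ``this follows from the K\"unneth formula,'' whereas your case analysis shows that K\"unneth plus the stated hypotheses do \emph{not} formally suffice: in the mixed sub-case $a'<a$, $b'>b$ of a source in the original half and a target in the twisted half, the pair $(F_b,F_{b'}\otimes\omega_Y^{-1})$ falls outside the Hom-free window and one must invoke the vanishing of $\Hom^t$ for $t<0$, which is automatic for (uniformly shifted) sheaves but not for arbitrary exceptional objects. You are right to flag this; it is a gap in the paper's own one-line argument rather than in yours, and it is harmless in all the examples the paper applies the lemma to (line bundles on surfaces), but strictly speaking the lemma as stated needs either this extra hypothesis or a further argument. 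For the pseudoheight bound the paper cites \cite[Lem.\ 4.10]{Kuz15} applied to the (now Hom-free) extended product collection, which gives $\mathrm{ph}_{\mathrm{ac}}\geq 1$; your direct chain estimate is self-contained, uses only the factor Hom-freeness (so it does not even depend on the first half), and is sharper: since every step of a finite-length-sum chain has both projections non-decreasing and at least one strictly increasing, and since \emph{both} wraparound terms land in their respective Hom-free windows, you get length-sum $\geq p+2-p=2$, hence $\mathrm{ph}\geq 2+\dim(X)+\dim(Y)$, strictly better than the bound asserted in the lemma. All the individual steps check out: the non-decreasing claim follows from semi-orthogonality of each factor via K\"unneth, $e(E,E)=0$ by exceptionality handles the stationary coordinate, and $a_0\leq a_p$, $b_0\leq b_p$ place the wraparound pairs inside the windows $[1,n]$ and $[1,m]$ as you say.
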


\begin{proof}
The first statement follows from the K\"unneth formula
\[\Hom^p(A\boxtimes B,C\boxtimes D)=\bigoplus\limits_{s+t=p}\Hom^s(A,C[s])\times \Hom^t(B,D[t]),\]
where $A,C\in \Db(X)$ and $B,D\in \Db(Y)$, and, of course, we use that $\omega_X\boxtimes\omega_Y=\omega_{X\times Y}$. The second statement follows from \cite[Lem.\ 4.10]{Kuz15}.
\end{proof}

The interest in the above notions stems from \cite[Cor.\ 4.6]{Kuz15} which says that if $\ka$ is the orthogonal complement of a given exceptional collection, then the restriction morphism on Hochschild cohomology $\mathsf{HH}^k(X)\to \mathsf{HH}^k(\ka)$ is an isomorphism for $k\leq \mathrm{ph}(E_1,\ldots,E_n)-2$ and an injection for $k\leq \mathrm{ph}(E_1,\ldots,E_n)-1$. In particular, we can apply this to the examples of quasi-phantom categories appearing in the literature. The following is a(n incomplete) list of examples, where the pseudoheight of the respective exceptional collection was computed (for (1)-(3) in \cite{Kuz15}):

\begin{enumerate}
\item $S$ is the classical Godeaux surface, $K_0(\ka)=\ZZ/5\ZZ$ (see \cite{BBS12}), the length of the exceptional sequence is $11$ and the pseudoheight is $3$;
\item $S$ is a Beauville surface, $K_0(\ka)=(\ZZ/5\ZZ)^3$ (see \cite{GS13}), the maximal length of an exceptional sequence is $4$, and the pseudoheight is $4$ or $3$, depending on the collection;
\item $S$ is a Burniat surface, $K_0(\ka)=(\ZZ/2\ZZ)^6$ (see \cite{A-O12}), the length of the exceptional sequence is $6$ and the pseudoheight is $4$;
\item $S$ is a surface isogeneous to a product, $K_0(\ka)=(\ZZ/3\ZZ)^5$, the length of the exceptional sequence is $4$ and the pseudoheight is $4$ (see \cite{Lee15}).
\end{enumerate}

Interestingly, given exceptional collections on $X$ and $Y$, it is not quite straightforward to compute the pseudoheight of the box product collection out of the pseudoheights of the original collections. First of all, note that
\[e(E\boxtimes F,G\boxtimes H)=e(E,G)+e(G,H).\]
Given a $p$-chain of length-sum $\alpha$ on $X$ and a $q$-chain of length-sum $\beta$ on $Y$, one easily sees that length-sum of the corresponding chain on $X\times Y$ is
\[(p+1)(\beta+q)+(q+1)(\alpha+p)-pq-p-q=p\beta+q\alpha+\beta+\alpha+pq.\] 
So, it is not obvious what the pseudoheight of the collection on the product is.

But at least in cases (2)-(4) the collections are Hom-free, hence we get

\begin{proposition}
Let $S, S'$ be two surfaces from items (2)-(4) on the above list. Consider the semi-orthogonal decomposition
\[\Db(S\times S')=\langle \ka_S\boxtimes\ka_{S'},\ka_S\boxtimes F_1,\ldots, E_1\boxtimes \ka_{S'},\ldots, E_1\boxtimes F_1,\ldots,E_k\boxtimes F_l\rangle.\]
Then 
\[\mathsf{HH}^k(\ka_S\boxtimes\ka_{S'},\ka_S\boxtimes F_1,\ldots,\ka_S\boxtimes F_l, E_1\boxtimes \ka_{S'},\ldots,E_k\boxtimes \ka_{S'})\cong\mathsf{HH}^k(S\times S')\]
for $k\leq 3$.\qqed
\end{proposition}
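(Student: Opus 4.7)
The plan is to reduce the proposition to a direct application of \cite[Cor.\ 4.6]{Kuz15} combined with the previous Lemma. First I would observe that the subcategory on the left-hand side of the claimed isomorphism is precisely the right orthogonal complement in $\Db(S\times S')$ of the box-product exceptional collection $(E_i\boxtimes F_j)_{i,j}$. Indeed, regrouping the induced semi-orthogonal decomposition yields
\[\Db(S\times S')=\langle \kb,\, E_1\boxtimes F_1,\ldots,E_k\boxtimes F_l\rangle\]
with $\kb=\langle \ka_S\boxtimes\ka_{S'},\,\ka_S\boxtimes F_j,\,E_i\boxtimes \ka_{S'}\rangle$, so that $\kb=\langle E_i\boxtimes F_j\rangle^\bot$.

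With this identification in place, the argument has two steps. On the one hand, for each of the surfaces in items (2)--(4) the extended exceptional collection is by assumption Hom-free, so the previous Lemma yields that the box-product collection $(E_i\boxtimes F_j)_{i,j}$ on $S\times S'$ is itself Hom-free and has pseudoheight at least $1+\dim(S)+\dim(S')=5$. On the other hand, \cite[Cor.\ 4.6]{Kuz15} asserts that the restriction morphism $\mathsf{HH}^k(S\times S')\to \mathsf{HH}^k(\kb)$ is an isomorphism for $k\le \mathrm{ph}(E_i\boxtimes F_j)-2$. Combining the two bounds produces an isomorphism for $k\le 3$, which is exactly the claim.

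I do not foresee a serious obstacle; the statement is essentially a repackaging of the two cited inputs. The only point deserving a mild check is that the inequality $\mathrm{ph}\ge 1+\dim(S)+\dim(S')=5$ from the previous Lemma is tight enough to yield the range $k\le 3$ and not a smaller one, which is automatic once one plugs in $\dim(S)=\dim(S')=2$. Note that the restriction to items (2)--(4) is essential: for the classical Godeaux surface of item (1) the pseudoheight of the original collection is only $3$, so even without passing to a product the analogous range of Hochschild degrees would be empty.
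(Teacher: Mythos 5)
Your argument is correct and coincides with the one the paper intends (the proposition carries no written proof precisely because it is the immediate combination of the preceding Lemma, giving pseudoheight at least $1+2+2=5$ for the Hom-free box-product collection, with \cite[Cor.\ 4.6]{Kuz15} applied to its orthogonal complement in $\Db(S\times S')$). Your identification of the left-hand subcategory as that orthogonal complement and the resulting bound $k\le \mathrm{ph}-2=3$ are exactly the intended steps.
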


Note that the subcategories $\ka_S\boxtimes F_i$ and $E_j\boxtimes \ka_{S'}$ are completely orthogonal for all $i,j$. Also note that $K_0(\ka_S\boxtimes F_i)=K_0(\ka_S)$ and $K_0(E_j\boxtimes \ka_{S'})=K_0(\ka_{S'})$, compare Lemma \ref{lSubcat}. It would be interesting to see whether there is ``more'' information, for example about deformations of $S\times S'$ which are described by $\mathsf{HH}^2(S\times S')$, contained in these quasi-phantoms or in the phantom category $\ka_S\boxtimes\ka_{S'}$.
\smallskip

Let us conclude with a list of questions one could also consider in this context. In the following we tacitly assume that the base field is $\CC$.

\begin{enumerate}
\item \textbf{Question 1:} If the Hochschild homology of an admissible subcategory of $\Db(X)$ is zero, is its Grothendieck group automatically finite or at least torsion?
\item
\textbf{Question 2:} Can a (quasi-)phantom category have a bounded t-structure?
\item
\textbf{Question 3:} In dimension $2$ (quasi-)phantom categories presumably do not appear if the Kodaira dimension is $0$. Does this hold in higher dimensions?
\item
\textbf{Question 4:} Can one describe the group of autoequivalences of a (quasi-)phantom category?
\item
\textbf{Question 5:} Is there a phantom category which is not a universal phantom category?
\item
\textbf{Question 6:} Does there exist a full exceptional collection on a Barlow or a Dolgachev surface?
\end{enumerate}

Note that it was proved in \cite[Thm.\ 5.5]{Gor-Orl} that the vanishing of $K_0$ over $\QQ$ automatically implies the vanishing of Hochschild homology (and the higher $K$-groups) of an admissible subcategory $\ka$. Hence, (1) asks for a converse.
\smallskip

Recall that a stability condition, see \cite{Bri-stab}, consists of a heart $\kc$ of a bounded t-structure on $\Db(X)$ and a group homomorphism $Z\colon K_0(\kc)=K_0(X)\to \CC$ which is assumed to satisfy some axioms. Among them is the following: $Z(T)\neq 0$ for any $0\neq T\in \kc$. Hence, there cannot be a stability condition on a quasi-phantom category. Note that the same argument shows that there cannot be a stability condition on a variety whose Grothendieck group has torsion. 
\smallskip

With regards to Question 3: most examples so far were constructed in Kodaira dimension $2$. In the recent preprint \cite{Cho-Lee} the authors construct a phantom on a Dolgachev surface which has Kodaira dimension $1$. Since K3 and abelian surfaces are Calabi-Yau, they do not admit any semi-orthogonal decompositions (this statement uses \cite[Prop.\ 3.10]{Huy-book}, which is due to Bridgeland) and the same holds for bielliptic surfaces by \cite[Thm.\ 1.7]{Kaw-Okawa}. It is not quite clear that an Enriques surface does not admit a (quasi-)phantom category, but at least it does not admit an almost full exceptional collection by \cite[Thm.\ 3.13]{Vial}.
\smallskip

Most examples of (quasi-)phantoms so far were constructed on surfaces $S$ with ample canonical bundle, so the group $\Aut(\Db(S))$ of autoequivalences of $S$ is a (semi-direct) product of automorphisms, line bundle twists and powers of the shift functor by \cite{Bon-Orl}. It would be interesting to understand whether the group of autoequivalences of a (quasi-)phantom category reflects some geometric aspects of the underlying variety.
\smallskip

Finally, the last question is connected to the folklore conjecture that the existence of a full exceptional collection implies rationality of a surface.

\end{document}